%\documentclass[12pt]{article}
%\usepackage{physics}
%\marginparwidth 0pt
%\oddsidemargin -5pt
%\evensidemargin 0pt
%\marginparsep 0pt
%\topmargin 0pt
%\textwidth 6.5in
%\textheight 8.5in

%\usepackage[usenames]{color}
%\definecolor{RGB}{255,127,0}

%\usepackage{amssymb}

\documentclass[11pt]{amsart}

\usepackage{amssymb}
\usepackage{amsthm}
\usepackage{amsmath}
\usepackage[all]{xy}
\usepackage[margin=1in]{geometry}

\usepackage{ulem}

\usepackage[usenames]{color}
\definecolor{ANDREW}{RGB}{255,127,0}

\usepackage{tikz}
\usetikzlibrary{arrows,calc}

\usepackage{tikz-cd}

\usepackage[foot]{amsaddr}

\usepackage{hyperref}

\theoremstyle{plain}

\newtheorem{thm}     {Theorem}[section]
\newtheorem{prop}    [thm]{Proposition}
\newtheorem{definition}  [thm]{Definition}

\newtheorem{lemma}   [thm]{Lemma}
\newtheorem{example} [thm]{Example}

\newcommand{\B}{\mathbb B}
\newcommand{\C}{\mathbb C}
\newcommand{\D}{\mathbb D}

\newcommand{\R}{\mathbb R}

\begin{document}

\title{On the Kobayashi metrics in Riemannian manifolds}

\author[H. Gaussier]{Herv\'e Gaussier$^1$}
\address{H. Gaussier: Univ. Grenoble Alpes, CNRS, IF, F-38000 Grenoble, France}
\email{herve.gaussier@univ-grenoble-alpes.fr}

\author[A. Sukhov]{Alexandre Sukhov$^2$}
\address{A. Sukhov: University  of Lille,   Laboratoire
Paul Painlev\'e,  Department of 
Mathematics, 59655 Villeneuve d'Ascq, Cedex, France, and
Institut of Mathematics with Computing Centre - Subdivision of the Ufa Research Centre of Russian
Academy of Sciences, 45008, Chernyshevsky Str. 112, Ufa, Russia.}
\email{sukhov@math.univ-lille1.fr}

\date{\today}
\subjclass[2010]{Primary: 32H02, 53C15.  Secondary:}
\keywords{Riemannian manifolds, conformal harmonic maps, Kobayashi hyperbolocity.}

\thanks{$^1\,$Partially supported by ERC ALKAGE}
\thanks{$^2\,$Partially suported by Labex CEMPI}

%Institut of Mathematics with Computing Centre - Subdivision of the Ufa Research Centre of Russian
%Academy of Sciences, 45008, Chernyshevsky Str. 112, Ufa, Russia.

\begin{abstract}
 We define the Kobayashi distance and the Kobayashi-Royden infinitesimal metric on any smooth Riemannian manifold $(M,g)$, using conformal harmonic immersions from the unit disk in $\C$ into $M$. We also study their basic properties, following the approach developped by to H.L.Royden \cite{Ro} for complex manifolds.
 \end{abstract}

\maketitle

%\tableofcontents
\section{Introduction}
The Kobayashi metric and the related notion of hyperbolicity of complex manifolds play a fundamental role in the geometric Complex Analysis and in Algebraic Geometry. The definition of the Kobayashi metric crucially uses holomorphic discs i.e., the holomorphic maps from the unit disc $\D$ of $\C$ to a prescribed complex manifold. Of course, this definition requires the presence of a complex (or, at least, almost complex) structure on a manifold. In \cite{Gr}, M.Gromov proposed to extend the notion of Kobayashi metric to the case of arbitrary Riemannian manifolds. He suggested to use conformal harmonic maps or, equivalently, conformal maps whose images are minimal surfaces, from $\D$ to a Riemannian manifold 
$(M,g)$. Recently, F.Forstneri\v c - D.Kalaj \cite{Fo-Ka} and B.Drinovec-Drnov\v sek - F.Forstneri\v c \cite{Dr-Fo} used this approach. They introduced the notion of Kobayashi metric and studied its important properties in the case of $\R^n$ equipped with the standard Euclidean metric $g_{st}$. The goal of the present paper is to consider the case of arbitrary Riemannian manifolds as it was suggested by M.Gromov. In particular, we extend to that general case some of the results of \cite{Dr-Fo}.

The paper is organized as follows. In Section 2 we recall some standard facts concerning minimal sufaces, conformal and harmonic maps. Lemma \ref{LemNW}, giving an existence of a (small) minimal surface with prescribed center and tangent direction, is necessary for the definition of the Kobayashi pseudodistance and of the Kobayashi-Royden pseudometric. Note that our approach is based on important works of B.White \cite{Wh1,Wh3}.  In Section 3, we introduce the Kobayashi-Royden pseudometric on a Riemannian manifold. The main result here is Theorem \ref{ThSemi} establishing the upper semi-continuity of that pseudometric on the tangent bundle. In Section 4, we introduce the notion of Kobayashi pseudodistance on a Riemannian manifold. The main result of this paper is  Theorem  \ref{KobTh} which claims the coincidence of the Kobayashi pseudodistance with the integral form of the Kobayashi-Royden pseudometric. In the complex case, this is a classical theorem of H.L.Royden  \cite{Ro}. In the last Section 5, we discuss some basic properties of Riemannian manifolds which are hyperbolic in the sense of Kobayashi i.e., the associated Kobayashi pseudodistance is a distance.

Finally, we note that many open questions, such as the sufficient conditions for complete hyperbolicity, the existence of normal families of minimal surfaces, the asymptotic estimates of the Kobayashi-Royden metric near the boundary, or the Gromov hyperbolicity of the Kobayashi distance, stay open in the case of Riemannian manifolds. We will consider them in forthcoming papers. The authors are grateful to F. Forstneri\v c bringing their attention to the papers  \cite{Dr-Fo, Fo-Ka}.

\section{Minimal  immersions}

We denote by $\D$ the unit disc in $\R^2$, by $ds^2$ the standard Riemannian metric on $\R^2$ and by $dm$ the standard Lebesgue measure on $\R^2$. For every $x \in \R^2$ and every $\lambda > 0$, we set $D(x,\lambda):=\{\zeta \in \C/\ |\zeta| < \lambda\}$.

Let $(M,g)$ be a Riemannian manifold. We assume that all structures are smooth of class $C^\infty$. We denote by $dist_g$ the distance induced by $g$, defined as the infimum of the length of $C^1$ paths joining two points. For every $p \in M$ and every $r > 0$, let $\B(p,r):=\{q \in M/\ dist_g(p,q) < r\}$.

A map $u: \D \to M$ is called harmonic if it is a critical point of the energy integral

\begin{eqnarray*}
E(u) = \int_\D \vert du \vert^2dm.
\end{eqnarray*}

 A harmonic map satisfies the Euler-Lagrange  equations 
 
 \begin{eqnarray}
\label{EL}
\Delta u + A(u) (du, du) = 0.
\end{eqnarray}

This is a second order elliptic PDE system. The initial regularity of $u$ may be prescribed in  the H\"older or Sobolev spaces. It follows by the elliptic regularity that $u$ is a smooth $C^{\infty}$ map on $\D$. Using the complex coordinates $z = x + iy$ on $\D$, one can rewrite the equations (\ref{EL}) in the form 

\begin{eqnarray}
\label{EL2}
\frac{\partial^2u}{\partial z \partial \overline z} + \Gamma^{i}_{jk}(u(z)) \frac{\partial u^j}{\partial z}\frac{\partial u^k}{\partial \overline z} = 0
\end{eqnarray}
(see \cite{Jo2}).

A smooth map $u : \D \to M$ is called conformal if  the pull-back $u^*g$ is a metric conformal to $ds^2$ i.e.,
there exists a smooth function $\phi$ such that $u^*g = e^\phi ds^2$ on $\D$. Recall that  any Riemannian metric 
$h$ on $\D$ admits conformal coordinates. This means that there exists a smooth diffeomorphism $\Phi: \D \to \D$, depending on $h$, such that $\Phi^*h$ is conformal to $ds^2$. This is a classical fact of  differential geometry, see \cite{Jo2}. Note that this result is true without additional assumptions only for manifolds of dimension 2.
If $u: \D \to M$ is a smooth immersion, we take $h = u^*g$, and the composition $u \circ \Phi$ becomes a conformal mapping. 
Of course, $\Phi$ depends on $u$ and is not unique. Using the group of conformal automorphisms of $\D$, we can always achieve the conditions 
$\Phi(0) = 0$ and $\Phi(1) = 1$. %Such a parametrization $\Phi$ is unique (? to check!).

The condition for $u$ to be conformal is equivalent to the conditions, satisfied for every $(x,y) \in \D$:

\begin{eqnarray}
\label{conf}
g_{u(x,y)}(\partial u/\partial x, \partial u/ \partial x) = g_{u(x,y)}(\partial u/\partial y, \partial u/ \partial y), \, \,\, g_{u(x,y)}(\partial u/\partial x, \partial u/ \partial y) = 0
\end{eqnarray}
(see \cite{Jo2}). %Note that this is a closed condition. Hence, conformal maps form a smooth Banach submanfold  $CF(\D,M)$ of the manifold of all immersions $\D \to M$.

Recall that a surface in $(M,g)$ is called minimal if its mean curvature (induced by $g$) vanishes. A conformal  immersion (i.e., its image) is minimal if and only if 
it is harmonic (see \cite{Jo2}).  The energy functional has very important compactness properties in suitable functional spaces. This makes it a usuful tool in order to study 
boundary values problems for minimal surfaces, in particular, the Plateau problem. 
However, in some cases it is more convenient to work with the area functional. Here we follow the approach of B. White \cite{Wh1}.

Let $u:\D \longrightarrow M$ be a smooth immersion. We denote by $g_{\mathbb D}:=u^*(g)$ the Riemannian metric on $\mathbb D$, pullback of the metric $g$ by $u$. Let $G_{\mathbb D}$ be the matrix $(G_{\mathbb D})_{i,j} = g_{\mathbb D}(\partial/\partial x_i,\partial/\partial x_j)$, for $i, j \in\{1,2\}$ (for convenience, in the matrix notations $x_1=x,\ x_2=y$). We also denote $u_x:=u_{\star}\left(\partial / \partial x\right )$ and $u_y:=u_{\star}\left(\partial / \partial y\right)$. In particular, the scalar product of $\partial / \partial x_i, \partial / \partial x_j$ evaluated at $(x,y) \in \mathbb D$ is equal to $g_{\mathbb D}(\partial / \partial x_i, \partial / \partial x_j) = g_{u(x,y)}(u_{x_i},u_{x_j})$. For convenience, we just write $g_{u(x,y)}(u_{x_i},u_{x_j})=:g_{u}(u_{x_i},u_{x_j})$. Then the area functional $A(u)$ of the immersion $u$ is defined by

\begin{eqnarray}
A(u) = \int_\D (\det G_{\mathbb D})^{1/2}dxdy = \int_{\mathbb D} \sqrt{g_{u}(u_x,u_x) g_{u}(u_y,u_y)-g_{u}(u_x,u_y)g_{u}(u_x,u_y)} \; dx dy.
\end{eqnarray}

One may view $A$ as a real map defined on the space of smooth immersions. A smooth immersion $u$ is called {\bf stationary} if the differential $DA$ of $A$ vanishes at $u$ i.e., $DA(u) = 0$. As it is shown in \cite{Wh1}, an immersion is stationary if and only if its image is a minimal surface. Therefore, after a suitable reparametrization, a stationary immersion  becomes a conformal harmonic map.

In the rest of the paper, we refer conformal harmonic immersions from $\D$ to $M$ as conformal harmonic immersed discs. Similarly, we  refer to stationary or minimal disc. Note also that we sometimes identify an immersed disc with its image when this does not lead to any confusion.

%It is also convenient to rewrite $A$ in the non-parametric form, denoting by $x,y$ real coordinates in $\D$ :

%\begin{eqnarray}
%A(f) = \int_\D F(x,y, f(x,y), Df(x,y))dxdy .
%\end{eqnarray}

Since we work only locally, we will consider $u : \mathbb D \rightarrow \mathbb R^{n}$ given as a graph :
$$
\forall (x,y) \in \mathbb D, \ u(x,y) = (x,y,u^3(x,y), \dots, u^n(x,y)),
$$
where $u^3, \dots, u^n$ are smooth $C^{\infty}$ functions. Then
$$
u_x:=(1,0,u^3_x, \dots, u^n_x)
$$
and
$$
u_y:=(1,0,u^3_y, \dots, u^n_y).
$$

Now we follow White's approach in \cite{Wh1, Wh3}. Let $h=(0,0,h^3, \dots, h^n):\D \rightarrow \R^{n}$ be a smooth map and let, for every $t \in [0,1]$ and every $(x,y) \in \D$, $\varphi^t(x,y) = (x,y,u^3(x,y)+th^3(x,y),\dots, u^n(x,y)+th^n(x,y))$. Then
$$
\varphi^t_x = (\varphi^t)_{\star}(\partial / \partial x) = (1,0,u^3_x+th^3_x, \dots, u^n_x + t h^n_x),
$$

$$
\varphi^t_y = (\varphi^t)_{\star}(\partial / \partial y) = (1,0,u^3_y+th^3_y, \dots, u^n_y + t h^n_y).
$$

and

$$
A(\varphi^t) = \int_{\mathbb D} \sqrt{g_{\varphi^t}(\varphi^t_x,\varphi^t_x) g_{\varphi^t}(\varphi^t_y,\varphi^t_y)-g_{\varphi^t}(\varphi^t_x,\varphi^t_y)g_{\varphi^t}(\varphi^t_x,\varphi^t_y)}dx dy.
$$

If we write $h_x=(0,0,h^3_x, \dots, h^n_x)$ and $h_y=(0,0,h^3_y, \dots, h^n_y)$, then we have, for every $t \in [0,1]$:
$$
\varphi^0_x = u_x, \ \ \ \frac{d}{dt}_{|t=0} \varphi^t_x = h_x
$$
and
$$
\varphi^0_y = u_y, \ \ \ \frac{d}{dt}_{|t=0} \varphi^t_y = h_y.
$$

Then $u$ is a stationary immersion if and only if for every smooth map $h$ we have
$$
\frac{d}{dt}_{|t=0} A(\varphi^t) = 0.
$$

%Let $X$ be a vector field in $N(f(\D))$ with compact support.
%We consider the one-parameter group of diffeomorphisms $\phi_t$ of $M$ generated by $X$. Then $\phi_t(f(\D))$ is a smooth small deformation of $f(\D)$ and if we write $^t f:=\phi_t \circ f$ then
%$$
%A(\phi_t \circ f) = \int_{\D}\left(g((^t f)_x,(^t f)_x)g((^t f)_y,(^t f)_y)-g^2((^t f)_x,(^t f)_y)\right)^{\frac{1}{2}}dxdy.
%$$
% Hence
% $$
% \frac{d}{dt}_{|t=0}(A(^t f(\D))) = \int_{\D} g(H(f),X) dxdy
% $$
% where
% $$
% H(f) = \frac{\left[g(f_y,f_y)-g(f_x,f_y)\right]f_x + \left[g(f_x,f_x)-g(f_x,f_y)\right]f_y}{\sqrt{\left(g(f_x,f_x)g(f_y,f_y)-g^2(f_x,f_y)\right)}}
% $$
% denotes the mean curvature vector of $f(\D)$.

The expression of  $\frac{d}{dt}_{|t=0} A(\varphi^t)$ is straightforward and depends linearly on $h$, $h_x$ and $h_y$. By integration by parts, we obtain a quasilinear operator, with respect to $u_x$, $u_y$, $u_{xx}$ and $u_{yy}$, that depends linearly on $h$. Using the Riesz representation theorem (see details in \cite{Wh1}), we obtain the following

\begin{lemma}\label{min-lemma}
With these notations, the stationary condition has the form

\begin{eqnarray}
\label{stat1} 
H(u) = 0
\end{eqnarray}
where
\begin{eqnarray}
\label{stat2}
H(u) = (H^1(u),...,H^n(u))
\end{eqnarray}
with
$$
H^j(u) = \psi_{j}(u_x,u_y,u_{xx},u_{xy},u_{yy}) \ {\rm for \ } j=1,2
$$
and
$$
H^j(u) = u^j_{xx}+u^j_{yy}+\psi_j(u_x,u_y,u_{xx},u_{xy},u_{yy}) \ {\rm for \ } j = 3,\dots, n.
$$

Here, for $j=1, \dots, n$, $\psi_j$ is a smooth $C^{\infty}$ function, without constant or linear terms with respect to $u_x,u_y,u_{xx},u_{xy},u_{yy}$. Furthermore, the vector $H(u)$ is orthogonal to $u(\D)$.

\vspace{1mm}
In particular, the operator $H$ is a quasilinear elliptic operator whose linearization at $u: (x,y) \in \D \mapsto (x,y,0,\dots,0) \in \R^n$ is

\begin{eqnarray}\label{linear-eq}
\left(0, 0, \Delta u^3, \cdots, \Delta u^n\right)
\end{eqnarray}

where $\Delta$ denotes the standard Laplace operator.
\end{lemma}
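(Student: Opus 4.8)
The plan is to follow B. White's variational computation \cite{Wh1}: carry out the first variation of the area functional explicitly, integrate by parts, and then read off the structure of the resulting Euler--Lagrange operator. First I would compute $\frac{d}{dt}\big|_{t=0} A(\varphi^t)$ for the vertical variations $\varphi^t = u+th$, $h=(0,0,h^3,\dots,h^n)$. Writing $E=g_u(u_x,u_x)$, $F=g_u(u_x,u_y)$, $G=g_u(u_y,u_y)$ and $W=\sqrt{EG-F^2}$, and differentiating $W(\varphi^t)$ in $t$, one gets an integrand that is linear in $h,h_x,h_y$: the derivatives $h_x,h_y$ enter through the variation of the frame $\varphi^t_x,\varphi^t_y$, while $h$ itself enters through the variation of the metric coefficients $g_{ij}(\varphi^t)$. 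The coefficients of this linear form are smooth functions of $u,u_x,u_y$ only, i.e. first order in $u$.

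Next I would integrate by parts in $(x,y)$ to move the derivatives off $h_x,h_y$, the boundary terms dropping for compactly supported $h$. Differentiating the first-order coefficients produces exactly the second derivatives $u_{xx},u_{xy},u_{yy}$, so the first variation becomes $\int_\D \sum_{j\geq 3}\Lambda^j(u)\,h^j\,dxdy$ with $\Lambda^j(u)$ a second-order quasilinear expression; the Riesz representation argument of \cite{Wh1} then identifies the Euler--Lagrange equations $\Lambda^j(u)=0$ for $j=3,\dots,n$. I would assemble these into the mean curvature vector $H(u)\in\R^n$, characterized by requiring $g_u(H(u),e_j)$ proportional to $\Lambda^j(u)$ for $j\geq 3$ (with $e_3,\dots,e_n$ the coordinate vectors) together with the two orthogonality relations $g_u(H(u),u_x)=g_u(H(u),u_y)=0$. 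Since $\{u_x,u_y,e_3,\dots,e_n\}$ is a basis of $\R^n$ and $g_u$ is nondegenerate, these prescriptions determine $H(u)$ uniquely, exhibit it as orthogonal to $u(\D)$, and show that the system $\Lambda^j(u)=0$ ($j\geq 3$) is \emph{equivalent} to $H(u)=0$. Solving the resulting $2\times 2$ orthogonality system for the first two components expresses $H^1,H^2$ as combinations of $u^k_x,u^k_y$ ($k\geq 3$) times the $H^k$, which carry no linear part in the jet; this yields $H^j=\psi_j$ for $j=1,2$.

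For the principal part I would expand $\Lambda^j$ for $j\geq 3$: the top-order contribution comes from $W g^{\alpha\beta}u^j_{\alpha\beta}$ and reduces, after dividing out the area element, to $u^j_{xx}+u^j_{yy}$, while every remaining term is built from products---expansions of $(EG-F^2)^{-1/2}$, of the induced inverse metric, and the Christoffel contributions $\Gamma^j_{ik}(u)\,u^i_\alpha u^k_\beta$---each of which is at least quadratic in the jet $(u_x,u_y,u_{xx},u_{xy},u_{yy})$ regarded as formal variables. This gives $H^j=u^j_{xx}+u^j_{yy}+\psi_j$ with $\psi_j$ free of constant and linear terms. Linearizing $H$ at the flat immersion $u_0=(x,y,0,\dots,0)$, the quadratic remainder $\psi_j$ contributes nothing to first order; the first two components vanish to first order because differentiating $g(H,u_x)=g(H,u_y)=0$ at $u_0$ (where $H(u_0)=0$) forces the $e_1,e_2$ components of the linearization to vanish; and on the remaining components one is left with the Laplacian, giving $(0,0,\Delta u^3,\dots,\Delta u^n)$.

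The heaviest part is the bookkeeping of the first variation and the integration by parts for a general metric $g$, but this is routine. The genuinely delicate points are, first, correctly packaging the $n-2$ scalar Euler--Lagrange equations into the full mean curvature vector $H(u)$ and using nondegeneracy of $g$ together with the basis property to prove equivalence with $H(u)=0$; and second, obtaining the linearization exactly as the standard Laplacian. The latter is where I expect the main obstacle: it requires that, in the chosen local coordinates, $g$ agree with the Euclidean metric to first order along the reference disc (geodesic normal or Fermi coordinates, so that the Christoffel symbols vanish there). Otherwise the term $\Gamma^j_{11}(u_0)+\Gamma^j_{22}(u_0)$ survives as a genuine constant contribution to $H^j$, which would both spoil the ``no constant term'' assertion for $\psi_j$ and perturb the zeroth-order part of the linearization. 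Thus the crux is to track carefully which metric terms are truly quadratic in the jet and which are constants that must be eliminated by the coordinate normalization.
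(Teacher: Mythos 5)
Your proposal is correct and follows essentially the same route as the paper, which likewise derives the lemma by computing the first variation of the area functional, integrating by parts, and invoking the Riesz representation argument of White \cite{Wh1}; you simply supply more of the bookkeeping (the packaging into the mean curvature vector and the orthogonality relations) that the paper leaves to the reference. Your closing caveat about needing the metric to agree with $g_{st}$ to first order is well taken and is consistent with how the paper actually applies the lemma, namely in rescaled normal coordinates where $h_t \to g_{st}$.
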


Since $H(u)$ is orthogonal to $u(\D)$, the equation (\ref{stat1}) is equivalent to the equations
\begin{eqnarray}
\label{stat11}
 H^j(u) = 0, \ j = 3,...,n.
 \end{eqnarray}

  In what follows we mean these equations when refering to (\ref{stat1}).

\vspace{2mm}
\begin{example}
Consider the special case where $M = \R^3$ and $g$ is the standard metric. Assume that the stationary map $u:\D \rightarrow \R^3$ is the graph of a function $f:\D \to \R$ i.e., $u:(x,y) \in \D \mapsto (x,y,f(x,y))$. Then the equation (\ref{stat11}) takes the form

\begin{eqnarray}
\label{standard}
(1+ u_y^2)u_{xx}  + (1 + u_x^2)u_{yy} - 2 u_xu_y u_{xy} = 0.
\end{eqnarray}
Its linearization at $u : (x,y) \in \D \mapsto (x,y,0) \in \R^3$ is the standard Laplace operator $\Delta$.
\end{example}

%\vspace{2mm}
We need the following classical property of this operator. Consider the logarithmic potential

\begin{eqnarray}
K(z) = (2\pi)^{-1} \log \vert z \vert.
\end{eqnarray}
This is a fundamental solution to the Laplace equation $\Delta u = 0$.  Consider the Poisson equation

\begin{eqnarray}
\label{Pois}
\Delta u = f \,.
\end{eqnarray}
Assume that $f$ belongs to the H\"older class $C^\mu(\overline \D)$, where $\mu$ is a positive real number. It is classical (see \cite{Mo1}) that the potential of $f$ defined by
\begin{eqnarray}
Uf(z) = \int_{\D} K(z - w) f(w) dm(w)
\end{eqnarray}
is a function of class $C^{2 + \mu}(\overline \D)$ and satisfies $\Delta U = f$. Furthermore, the linear operator 
$U: f \mapsto Uf$, $U: C^\mu(\overline\D) \to C^{2 + \mu}(\overline \D)$, is bounded.

We denote $e_1:=(1,0) \in \R^2$. The following result claims that locally there are many conformal harmonic maps.

\begin{lemma}
\label{LemNW}
\begin{itemize}
\item[(i)]  Let $p \in M$ and $\xi \in T_pM \setminus \{ 0 \}$. Then there exists a conformal harmonic immersion 
$u: \D \to M$ such that $u(0) = p$ and $du(0) \cdot e_1 = \alpha \xi$ for some $\alpha > 0$. Furthermore, this immersion depends smoothly on  $p$ and $\xi$.
\item[(ii)] Let $u: \D \to M$ be a conformal harmonic immersion. Suppose that $u(\D)$ is contained in a ball $\B(p,r)$ of radius $r > 0$ small enough.
Then there exists $r_1 < r$ and a smooth $(n-2)$-parameter foliation of $B(p,r_1)$ by conformal harmonic discs containing $u$ as a leaf. Moreover, given $q$ 
close enough to $p$, and a vector $\xi$ close enough to $du(0) \cdot e_1$, one can choose the above foliation such that for some leaf $\tilde u$ one has $\tilde u(0) = q$ and $d\tilde u (0) \cdot e_1 = \xi$.
\end{itemize}
\end{lemma}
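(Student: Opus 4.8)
The plan is to deduce both statements from the implicit function theorem applied to the quasilinear elliptic operator $H$ of Lemma~\ref{min-lemma}, using the crucial fact \eqref{linear-eq} that its linearization at the flat disc $u_0\colon(x,y)\mapsto(x,y,0,\dots,0)$ is the componentwise Laplacian. First I would work in geodesic normal coordinates centered at $p$, rotated so that $\xi$ is parallel to the first coordinate axis; then $g_{ij}(0)=\delta_{ij}$. To make the flat disc an \emph{exact} solution of the model problem I would rescale: setting $g^{(\lambda)}_{ij}(x):=g_{ij}(\lambda x)$, the metrics $g^{(\lambda)}$ converge in every $C^k(\bar\D)$ norm to the Euclidean metric as $\lambda\to 0$, and since minimality is preserved both by the dilation $x\mapsto\lambda x$ and by constant rescalings of the metric, it suffices to produce minimal discs for $g^{(\lambda)}$ near $u_0$, which is exactly conformal harmonic for $g^{(0)}=g_{st}$. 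Writing a graph $u=(x,y,v)$ with $v=(v^3,\dots,v^n)$, denoting by $H=H^{(\lambda)}$ the operator of Lemma~\ref{min-lemma} associated with $g^{(\lambda)}$, and setting $\Phi(\lambda,v):=(H^3(u),\dots,H^n(u))$, I would study $\Phi$ on Hölder spaces. By \eqref{linear-eq} one has $\Phi(0,0)=0$ and $D_v\Phi(0,0)=\Delta$, which by the potential-theoretic estimate for $U$ stated above together with the maximum principle is a Banach-space isomorphism $C^{2,\mu}_0(\bar\D,\R^{n-2})\to C^{\mu}(\bar\D,\R^{n-2})$ for Dirichlet data. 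The implicit function theorem then yields, for all small $\lambda$, a unique small solution $v_\lambda$ depending smoothly on $\lambda$ and on the coordinate data, hence on $p$ and $\xi$.

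For part (i) I would next arrange the two normalizations $u(0)=p$ and $du(0)\cdot e_1=\alpha\xi$. The graph over the $x_1x_2$-plane passes through $p$ and is tangent to $\xi$ exactly when $v(0)=0$ and $\nabla v(0)=0$; these do not hold automatically once $\lambda>0$, so I would add finitely many parameters to the boundary data (equivalently, small ambient translations and tilts of the graphing plane keeping the line $\xi$ inside it). At the Euclidean limit all conditions hold with vanishing parameters, and the linearization of the finite-dimensional correction map is the evaluation of the harmonic extension and of its gradient at the center, which is onto because affine boundary data realize arbitrary value and gradient; hence one can solve for the parameters. Finally, since the resulting minimal disc is a genuine immersion, I would pass to a conformal parametrization using the existence of conformal coordinates recalled above, normalized by $\Phi(0)=0$ so that the center is preserved; the isothermal chart depends smoothly on the induced metric and hence on $(p,\xi)$, and a conformal parametrization of a minimal surface is automatically harmonic, giving the desired conformal harmonic immersion with smooth dependence (the positive factor $\alpha$ absorbing the dilation and conformal scale).

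For part (ii) the point is to realize an $(n-2)$-parameter family of minimal discs as a genuine foliation. Since $\B(p,r)$ is a small ball, in normal coordinates $u$ is itself a small $C^{2,\mu}$ perturbation of a flat disc, so I would reproduce the construction above around $u$, translating the Dirichlet boundary data by constants $a\in\R^{n-2}$ and solving $H^{(\lambda)}=0$ by the same implicit function theorem to obtain leaves $v_a$ with $v_0$ giving $u$. The key observation that avoids any maximum principle, which is unavailable for the system \eqref{stat11}, is that the \emph{foliation chart} $(a,\zeta)\mapsto(\zeta,v_a(\zeta))$ reduces at the Euclidean limit to the trivial map $(a,\zeta)\mapsto(\zeta,a)$, a diffeomorphism onto a neighborhood of $p$; by smooth dependence and the inverse function theorem it remains a diffeomorphism for small $\lambda$, so the leaves are automatically disjoint and foliate some $B(p,r_1)$. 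To prescribe a nearby center $q$ and tangent $\xi$, I would instead take as central leaf the disc produced by part (i) through $q$ tangent to $\xi$, matching the full vector by exhausting the length with the one-parameter conformal scaling at the center, and build the foliation around it; the smooth dependence on $(q,\xi)$ guaranteed by part (i) ensures that for $(q,\xi)$ close to $(p,du(0)\cdot e_1)$ the new foliation is $C^1$-close to the original one and still covers $B(p,r_1)$ after possibly shrinking $r_1$. The main obstacle throughout is organizing the functional-analytic framework so that the finitely many geometric normalizations (center, tangent, conformal scale) are matched simultaneously with the infinite-dimensional solve; the disjointness of leaves, usually the delicate point for minimal \emph{systems}, is here disposed of by the perturbation-of-the-flat-foliation argument.
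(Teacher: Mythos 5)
Your proposal is correct and follows essentially the same route as the paper: normal coordinates at $p$, the rescaled metrics $t^{-2}g(t\cdot)$ converging to the Euclidean one, the graph ansatz with linearized operator $\Delta$, the implicit function theorem in H\"older spaces, a finite-dimensional adjustment to hit the prescribed center and tangent, and a final conformal reparametrization. You are in fact somewhat more explicit than the paper on two points it leaves implicit --- how the center/tangent normalization is solved for (explicit affine parameters versus openness of the evaluation map $(p',\xi')\mapsto(u_{t,p',\xi'}(0),du_{t,p',\xi'}(0)\cdot e_1)$) and why the leaves in part (ii) are genuinely disjoint (the foliation chart being a perturbation of $(a,\zeta)\mapsto(\zeta,a)$) --- but these are refinements of the same argument rather than a different one.
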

\begin{proof} (i) We consider local coordinates $(x_1,\dots,x_n)$ on $M$ in which $p = 0$. We also assume that 
these coordinates are normal for the Levi - Civita connexion of $(M,g)$. Then in these coordinates $g_{ij}(0) = \delta_{ij}$
and the first order partial derivatives of $g_{ij}$ vanish at $0$. Consider the metric $g_t(x_1,\dots,x_n) := g(tx_1, \dots, tx_n)$ for $t > 0$. We notice that such a metric is isometric to $g$.
The metric $h_t = t^{-2}g_t$ is not isometric to $g_t$, but the corresponding  set of stationary surfaces is the same as for $g_t$ ; this follows immediately from the expression for the area functional and the definition of stationary immersions. Finally, note that the metric $h_t$ converges to the standard metric $g_{st}$ of $\R^n$ in any $C^k$-norm on any compact subset of  $\R^n$, as $t \rightarrow 0$.

%$\Gamma^i_{jk} (0) = 0$,
%the metric $g$ at $p$ is the Kronecker delta and the first order partial derivatives of $g$ vanish at $0$. Hence, shrinking 
%a neighborhood of the origin, we can assume that the norm of $A$ in (\ref{EL}) is as small as we need.
%We are looking the solution of the equation (\ref{EL2}) in te class of smooth map from $CF(\D,M)$.

%Now we replace the equation (\ref{EL2}) (in local coordinates $x$ fixed above) by the equation

We may assume that $\xi = (1,0,...,0)$ and we search for a suitable immersion of the unit disc of the form $(x,y) \mapsto (x,y,f(x,y))$ i.e., as the graph of a vector function $f: \D \to \R^{n-2}$. Consider the equation (\ref{stat11}) for 
the metric $h_t$. For $t = 0$, it becomes a vector analog of (\ref{standard}). Its linearization at $f=0$ is given by (\ref{linear-eq}) which is a surjective operator  from $C^{2,\alpha}(\D, \R^{n-2})$ to $C^{0,\alpha}(\D,\mathbb R^{n-2})$. By the implicit function theorem the equation~(\ref{stat1}) admits solutions for $t > 0$ small enough. Namely, for every sufficiently small $t$, there exists a minimal surface, given by a smooth conformal harmonic immersion $u_{t,p,\xi} : \D \rightarrow \mathbb R^n$, for the Riemannian metric $h_t$, such that $u_{t,p,\xi}(0)$ is close to $p$ and $du_{t,p,\xi}(0) \cdot e_1$ is close to $\xi$. Now, if $\mathcal U_{p}$ is a small neighborhood of $p$ in $\R^n$ and $\mathcal V_{\xi}$ is a small neighborhood of $\xi$ in the unit sphere (for the standard metric in $\R^n$), the same reasoning implies that for every sufficiently small $t$, the set $\{(u_{t,p',\xi'}(0), du_{t,p',\xi'}(0) \cdot e_1), \ p' \in \mathcal U_{p},\ \xi' \in \mathcal V_{\xi}\}$ fills an open neighborhood of $(p,\xi)$ in $\R^n \times \R^n$. Hence, for sufficiently small $t$, there exists $(p',\xi') \in \mathcal U_{p} \times \mathcal V_{\xi}$ such that $u_{t,p',\xi'} (0) = p$ and $du_{t,p',\xi'} (0) \cdot e_1 = c \xi$ for some real number $c$ close to one.  %This provides, for every sufficient small $t$, a minimal surface for $g_t$, given by $u_{t,p',\xi'}(\D)$. 
Finally, being a solution of the equation~(\ref{stat1}), $u_{t,p',\xi'} : \D \rightarrow \R^n$ is a conformal harmonic immersion  for $g_t$ (after a suitable reparametrization), and therefore for $g$. The smooth dependence on $p$ and $\xi$ follows from the implicit function theorem. This proves Part (i) of Lemma~\ref{LemNW} . 

(ii) Choose local coordinates $(x_1, \dots, x_n)$ as above. Also, set $(\tilde x_1, \dots, \tilde x_n) = (tx_1, \dots, tx_n)$ for $t > 0$ small enough and again consider the metrics $h_t(\tilde x_1, \dots, \tilde x_n) = t^{-2}g(tx_1, \dots, t x_n)$. We may assume that, in the coordinates $(x_1,\dots,x_n)$, the conformal harmonic immersion $u:\D \rightarrow M$ has the form  $u(x,y) = L(x,y) + O(\vert (x,y) \vert^2)$ where $L$ is a linear map from $\R^2$ to $\R^n$, of rank 2. Then the disc $u_t:(x,y)\in \D \mapsto u(tx, ty)$ has the expansion $u_t(x,y) = L(x,y) + O(t)$  in the coordinates $(\tilde x_1, \dots, \tilde x_n)$. As $t \to 0$, this family of discs converges to a conformal linear disc. Note that after an isometric (with respect 
to  $g_{st}$ structure ) this disc is a graph of the zero map. Then the desired result follows  by the implicit function theorem as in the part (i). % {\color{red}Part (ii) should be written more explicitely.}

%\begin{eqnarray}
%\label{EqNW}
%u + U \left(\Gamma^{i}_{jk}(u(z)) \frac{\partial u^j}{\partial z}\frac{\partial u^k}{\partial \overline z}\right) = h
%\end{eqnarray}
%where $h: \D \to \R^n$ is a usual harmonic map (which is automatically conformal for the usual metric on $\R^n$). More precisely, we take any linear map $h$ with precribed 
%image $dh(0)(1,0) = V_1$. The operator 

%$$G: u \mapsto u + U\left(\Gamma^{i}_{jk}(u(z)) \frac{\partial u^j}{\partial z}\frac{\partial u^k}{\partial \overline z}\right)$$
%$$G: CF(\D,M) \to C^{2+ \mu}(\overline \D,M)$$ 
%is a small perturbation of the identity operator in $C^{2+ \mu}(\overline \D,M)$ (more precisely, of the restriction of the identity operator on $CF(\D,M)$). 
%By the implicit function theorem (which we apply at $u = 0$) 
%$G$ is invertible and we have a solution $u = G^{-1}h$ for any $h$. Clearly, $u$ satisfies (\ref{EL2}). 
\end{proof}

\bigskip

Note that there is another way to prove Lemma~\ref{LemNW}, based on deep results on the Plateau problem.
%There is another way to prove Lemma~\ref{LemNW} which is shorter, but employs much more advanced results on the Plateau problem.
The following result is a very special case of the fundamental theorem of Morrey \cite{Mo2}. Let $\phi: \overline \D \to (M,g)$ be a smooth map. Suppose that a Riemannian manifold $(M,g)$ satisfies some standard metric assumptions (for example, $M$ is compact). Then there exists a smooth map $u: \overline \D \to M$ such that $u \vert_{b\D} = \phi \vert_{b\D}$. 
Such a map is conformal and harmonic. In particular, the image of $u$ is a minimal surface in $M$ with the boundary $u(b\D)$. Furthermore, it follows from the results of Hildebrandt
\cite{Hi} that $u$ depends continuously on the perturbation of $\phi$. Choose local coordinates on $M$ as above and consider all conformal linear maps from $\D$ to this local chart. Then the theorem of Morrey can be applied to the images of the unit circle by these maps. Since, in these local coordinates, the metric $g$ is a small perturbation of the standard one, minimal surfaces given by Morrey's theorem are small deformations of the linear discs. This implies Lemma~\ref{LemNW}.

\section{The Kobayashi-Royden pseudometric on a Riemannian manifold}

Let $(M,g)$ be a Riemannian manifold. For a point $p \in M$ and a tangent vector   $\xi \in T_pM$ we set
$$F_M(p,\xi) := \inf \frac{1}{r}$$
where $r$ runs over all positive real numbers for which there exists a conformal harmonic immersion  $u: \D \to M$ such that 
$u(0) = p$ and $du(0) \cdot e_1 = r \xi$. It follows by Lemma \ref{LemNW} that $F_M$ is well defined for every $(p,\xi)$ in the tangent bundle $TM$. We call $F_M$ the {\sl Kobayashi-Royden pseudometric} for the Riemannian manifold 
$(M,g)$.

\begin{lemma}
\label{Lem1}
Let $f: (M,g) \to (N,h)$ be an isometry  between two Riemannian manifolds i.e., $f$ is differentiable and satisfies $g = f^*h$. Then 
$$F_N(f(p),df(p)\xi) \le F_M(p,\xi).$$
In particular, if $M$ is a connected open subset of $N$, then 
$$F_N(p,\xi) \le F_M(p,\xi).$$
\end{lemma}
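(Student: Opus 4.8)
The plan is to exhibit a correspondence between conformal harmonic immersions into $M$ and such immersions into $N$, using the isometry $f$ to transport discs. The key observation is that $f$ preserves exactly the data entering the definition of $F_M$: the property of being a conformal harmonic immersion, the base point, and the tangent direction scaled by $r$.

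\medskip

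First I would take an arbitrary conformal harmonic immersion $u : \D \to M$ realizing a given competitor $1/r$ in the infimum defining $F_M(p,\xi)$; that is, $u(0) = p$ and $du(0)\cdot e_1 = r\xi$ for some $r > 0$. I would then set $v := f \circ u : \D \to N$ and argue that $v$ is again a conformal harmonic immersion. Since $f$ is an isometry, $g = f^* h$, so the pull-back $v^* h = u^* f^* h = u^* g$. Because $u$ is conformal, $u^* g = e^\phi\, ds^2$, hence $v^* h = e^\phi\, ds^2$ as well, so $v$ is conformal with the very same conformal factor. For harmonicity, an isometry maps geodesics to geodesics and preserves the Levi-Civita connection, so it carries stationary (minimal) immersions to stationary immersions; equivalently, since the area functional is defined purely in terms of the pulled-back metric (which is unchanged), $v$ is stationary precisely when $u$ is, and by the discussion preceding Lemma~\ref{min-lemma} a stationary immersion is conformal harmonic after reparametrization. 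That $v$ is an immersion follows because $df(u(z))$ is a linear isometry on each tangent space, hence injective, so $dv = df \circ du$ has the same rank as $du$.

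\medskip

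Next I would check that $v$ realizes the same parameter $r$ for the point $(f(p), df(p)\xi)$ on $N$. Indeed $v(0) = f(u(0)) = f(p)$, and by the chain rule $dv(0)\cdot e_1 = df(p)\big(du(0)\cdot e_1\big) = df(p)(r\xi) = r\, df(p)\xi$. Therefore $1/r$ is an admissible value in the infimum defining $F_N(f(p), df(p)\xi)$. Taking the infimum over all such competitors $u$ for $F_M(p,\xi)$ yields $F_N(f(p), df(p)\xi) \le F_M(p,\xi)$, which is the claimed inequality.

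\medskip

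Finally, for the particular case where $M$ is a connected open subset of $N$, I would apply the general statement to the inclusion map $f : M \hookrightarrow N$, which is an isometry onto its image when $M$ carries the restricted metric. Here $f(p) = p$ and $df(p) = \mathrm{id}$, so the inequality reduces directly to $F_N(p,\xi) \le F_M(p,\xi)$. I do not anticipate a serious obstacle: the argument is a routine functoriality check. The only point requiring mild care is confirming that the reparametrization turning a stationary immersion into a conformal harmonic one is compatible on both sides, but this is immediate since $u$ and $v = f\circ u$ induce identical metrics on $\D$ and hence require the same conformal reparametrization $\Phi$.
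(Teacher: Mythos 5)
Your argument is correct and is exactly the standard functoriality argument that the paper relies on: the paper in fact states Lemma \ref{Lem1} without any proof, treating it as immediate from the definition of $F_M$ via composition of competitor discs with $f$. The only point worth noting is that your harmonicity step (preservation of the Levi-Civita connection, hence of minimality) uses that $f$ is an isometry onto its image in the codimension-zero sense --- which is the intended reading here (a metric-preserving diffeomorphism, or the inclusion of an open subset) --- since for a higher-codimension isometric immersion the composition $f\circ u$ need not remain minimal.
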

%\begin{proof}  %Let $u: \D \to M$ be a conformal harmonic map. The image $S = (u(\D),g)$ is a surface. The isometry immersiton $i: S \to M$ has a minimal image, so is harmonic ,
%  see \cite{J}.
%Since $f$ is a harmonic morphism and $u$ is a harmonic morphism (?).

%Indeed, if $u: \D \to M$ is a conromal harmonic map, the $u$ is also a conformal harmonic map to $N$. Therefore, the claas of conformal
 %harmonic maps to $M$ is a subset of the classs of conformal harmonic maps to $N$, and the result follows by definition of $F$.
%\end{proof}
For $(p,\xi) \in TM$, we denote $\vert \xi \vert_g:=\sqrt{g_p(\xi,\xi)}$, when no confusion is possible.
\begin{lemma}
\label{Lem2}
The function $F_M$ is non-negative, and for any real $a$ one has
$$F_M(p, a\xi) = \vert a \vert F_M(p,\xi).$$
If $K$ is a compact subset of $M$  then there is a constant $C_K > 0$ such that 
$$F_M(p,\xi) \le C_K \vert \xi \vert_g.$$
\end{lemma}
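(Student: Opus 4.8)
The plan is to check the three assertions in turn, the first two being formal consequences of the definition and the third resting on Lemma~\ref{LemNW}(i). Non-negativity is immediate: $F_M(p,\xi)$ is an infimum of numbers $1/r$ with $r>0$, each of which is positive, so $F_M(p,\xi)\ge 0$. For the homogeneity I would first treat a positive scalar $a>0$ by a change of variable inside the infimum. A number $r>0$ is admissible for $(p,a\xi)$, meaning there is a conformal harmonic immersion $u$ with $u(0)=p$ and $du(0)\cdot e_1 = r(a\xi)$, precisely when $s:=ra$ is admissible for $(p,\xi)$, since the constraint $du(0)\cdot e_1=(ra)\xi=s\xi$ is the same. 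Writing $1/r=a/s$ and passing to the infimum gives $F_M(p,a\xi)=a\,F_M(p,\xi)$. To incorporate the sign I would establish the reflection invariance $F_M(p,-\xi)=F_M(p,\xi)$: if $u$ is admissible for $(p,\xi)$, then $\tilde u(z):=u(-z)$ is again a conformal harmonic immersion, because precomposition with the holomorphic automorphism $z\mapsto -z$ of $\D$ preserves both conformality and, in domain dimension two, harmonicity; it satisfies $\tilde u(0)=p$ and $d\tilde u(0)\cdot e_1=-du(0)\cdot e_1$, so the admissible $r$ for $(p,-\xi)$ are exactly those for $(p,\xi)$. Combining the two facts yields $F_M(p,a\xi)=\vert a\vert\,F_M(p,\xi)$ for every $a\neq 0$; for $a=0$ the stated identity reduces to $F_M(p,0)=0$, the value we assign on the zero vector, since no immersion realizes a vanishing differential and the zero direction must be handled by this convention.

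The one substantive step is the bound over a compact set $K$, and here I would use the smooth dependence asserted in Lemma~\ref{LemNW}(i). By the homogeneity just proved it suffices to bound $F_M(p,\xi)$ for unit vectors, i.e. on the set of $(p,\xi)$ with $p\in K$ and $\vert\xi\vert_g=1$, which is compact. For each such $(p,\xi)$, Lemma~\ref{LemNW}(i) produces a conformal harmonic immersion with $du(0)\cdot e_1=\alpha\xi$ for some $\alpha=\alpha(p,\xi)>0$ depending smoothly, hence continuously and locally, on $(p,\xi)$; this gives the pointwise estimate $F_M(p,\xi)\le 1/\alpha(p,\xi)$. Covering the compact sphere bundle over $K$ by finitely many neighborhoods on each of which such a positive continuous $\alpha$ is defined, and taking the minimum of these finitely many functions over the corresponding compact pieces, yields a uniform lower bound $\alpha_0>0$. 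Then $F_M(p,\xi)\le 1/\alpha_0=:C_K$ for all unit $\xi$, and for arbitrary $\xi\neq 0$ the homogeneity applied to $\xi=\vert\xi\vert_g\,(\xi/\vert\xi\vert_g)$ gives $F_M(p,\xi)\le C_K\vert\xi\vert_g$.

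The main obstacle is precisely this last step: upgrading the pointwise existence in Lemma~\ref{LemNW}(i) to a uniform positive lower bound on the scaling factor $\alpha$ over $K$. This is exactly where the smoothness (and thus continuity) of the immersion in the parameters $(p,\xi)$, together with the compactness of the unit sphere bundle over $K$, are both essential; the non-negativity and the homogeneity, by contrast, are purely formal manipulations of the defining infimum.
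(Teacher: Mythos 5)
Your proof is correct and, for the compact-set bound (the only assertion the paper actually argues), follows essentially the same route: Lemma~\ref{LemNW}(i) plus a finite cover of the unit sphere bundle over the compact set, using the smooth dependence on $(p,\xi)$ to get a locally uniform positive scaling factor. Your formal verifications of non-negativity and absolute homogeneity --- including the reflection $z\mapsto -z$ for the sign and the observation that $F_M(p,0)=0$ must be adopted as a convention, since an immersion never has vanishing differential --- are correct and supply details the paper leaves implicit.
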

\begin{proof} For every point $p \in M$ and every sufficiently small open neighborhood $U$ of $p$, Lemma \ref{LemNW} implies that there exists $\varepsilon > 0$ with the following property: for every point $q \in U$, every unit vector $\xi \in T_qM$ and every $r \in (0,\varepsilon)$, there exists a conformal harmonic immersion $u: \D \to M$ such that $u(0) = q$, $du(0) \cdot e_1 = r \xi$.  Then for $C _U= 1/\varepsilon$ we have 
$F(q,\xi) \le C_U \vert \xi \vert_g$ for all vectors $\xi$ (not nesessarily unit). Covering $K$ by a finite number of open neighborhoods, we conclude.
\end{proof}

The main regularity property is given by the following

\begin{thm}
\label{ThSemi}
The function $F_M$ is  upper semi-continuous on the tangent bundle $TM$.
\end{thm}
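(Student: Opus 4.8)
The plan is to establish the equivalent statement that for every $(p_0,\xi_0)\in TM$ one has $\limsup_{(q,\zeta)\to(p_0,\xi_0)}F_M(q,\zeta)\le F_M(p_0,\xi_0)$. The case $\xi_0=0$ is immediate: by the homogeneity in Lemma \ref{Lem2} one has $F_M(p_0,0)=0$, while the local bound $F_M(q,\zeta)\le C_K\,\vert\zeta\vert_g$ from the same lemma forces $F_M(q,\zeta)\to 0$ as $\zeta\to 0$. So I would fix $(p_0,\xi_0)$ with $\xi_0\neq 0$, set $a:=F_M(p_0,\xi_0)$, fix $\varepsilon>0$, and choose a conformal harmonic immersion $u_0:\D\to M$ with $u_0(0)=p_0$, $du_0(0)\cdot e_1=r_0\xi_0$ and $1/r_0<a+\varepsilon$. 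Replacing $u_0$ by $\zeta\mapsto u_0(\rho\zeta)$ with $\rho<1$ close to $1$ only replaces $r_0$ by $\rho r_0$ while keeping $1/(\rho r_0)<a+\varepsilon$, so after renaming I may assume $u_0$ is a smooth immersion on $\overline\D$.

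The heart of the argument is to produce, for $(q,\zeta)$ near $(p_0,\xi_0)$, a conformal harmonic immersion $u_{q,\zeta}:\D\to M$ depending continuously on $(q,\zeta)$, with $u_{p_0,\xi_0}=u_0$, and such that $u_{q,\zeta}(0)=q$ and $du_{q,\zeta}(0)\cdot e_1=r(q,\zeta)\,\zeta$ with $r(q,\zeta)\to r_0$. Granting this, $F_M(q,\zeta)\le 1/r(q,\zeta)<a+2\varepsilon$ for $(q,\zeta)$ close enough, which is exactly the desired inequality. I would build the family in two steps. First, precomposing $u_0$ with a smooth family of M\"obius automorphisms of $\D$ (normalized as after (\ref{conf})) realizes the variation of the center along the surface $u_0(\D)$ and the variation of the central direction inside the tangent plane $du_0(0)(\R^2)$; since M\"obius maps are conformal, harmonicity and conformality are preserved. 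Second, the remaining transverse variation of the center and of the direction is realized by genuinely deforming the surface: I would solve the stationary equation $H(u)=0$ of Lemma \ref{min-lemma} by the implicit function theorem, treating the prescribed data $(u(0),du(0)\cdot e_1)$ as parameters. Writing the deformation as a normal graph over $u_0(\D)$, the linearization of $H$ at $u_0$ is, by Lemma \ref{min-lemma} and in normal coordinates, a lower-order perturbation of the componentwise Laplacian (\ref{linear-eq}); inverting it through the potential $U:C^{\mu}(\overline\D)\to C^{2+\mu}(\overline\D)$ would produce a smooth family of minimal discs whose centers fill a full neighborhood of $p_0$ in $M$ and whose central directions fill a neighborhood of $\xi_0$.

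The main obstacle is that $u_0$ cannot be taken small: by the interior gradient estimate together with the monotonicity formula, a conformal harmonic immersion whose image lies in $\B(p_0,\delta)$ satisfies $\vert du_0(0)\cdot e_1\vert_g\le C\delta$, hence has a large value $1/r_0$, so near-optimal discs are genuinely large and neither the small-disc foliation of Lemma \ref{LemNW}(ii) nor a single normal chart is available directly. Consequently one must control the global linearized (Jacobi) operator along $u_0$ and verify that the evaluation map $u\mapsto(u(0),du(0)\cdot e_1)$ is a submersion at $u_0$ onto a neighborhood in $TM$. I would handle this by covering the compact image $u_0(\overline\D)$ by finitely many normal-coordinate charts, writing $H$ and its linearization as small perturbations of the flat model in each chart, and piecing together the corresponding inverses built from the potential $U$; the surjectivity of $U$ onto the prescribed value and first derivative at $\zeta=0$ then yields the required transverse freedom. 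Finally, continuity of the whole construction in $(q,\zeta)$ gives $r(q,\zeta)\to r_0$, which closes the argument.
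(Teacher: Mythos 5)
Your reduction is the same as the paper's: both arguments hinge on showing that a single near-optimal conformal harmonic immersion $u_0$ (with $u_0(0)=p_0$, $du_0(0)\cdot e_1=r_0\xi_0$ and $1/r_0<F_M(p_0,\xi_0)+\varepsilon$) embeds in a continuous family of conformal harmonic discs whose centers and central derivatives fill a neighborhood of $(p_0,r_0\xi_0)$ in $TM$; this is exactly Proposition \ref{PropSemi}. Your treatment of the degenerate case $\xi_0=0$ via Lemma \ref{Lem2} is correct, the M\"obius reparametrizations for the tangential directions are fine, and you correctly isolate the real difficulty: $u_0$ is a \emph{large} disc, so neither the small-disc foliation of Lemma \ref{LemNW}(ii) nor a single normal chart applies, and the transverse deformations must be produced globally along $u_0$.

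The gap is in your resolution of that difficulty. ``Covering $u_0(\overline\D)$ by finitely many normal charts and piecing together the corresponding inverses built from the potential $U$'' is not a valid construction: solutions of the linearized (Jacobi) equation $DH(u_0)\cdot h=0$ do not patch by partitions of unity, and local solvability in each chart says nothing about the existence of a \emph{global} solution on $\D$ with prescribed transverse value and first derivative at the origin --- which is precisely the surjectivity your implicit function theorem needs. This is the genuine mathematical content of the step, and the paper supplies it with two nontrivial external inputs: (a) Lax's equivalence theorem \cite{La} (a Runge-type approximation statement, whose unique-continuation hypothesis is checked via Calderon's theorem \cite{Ca}), which upgrades the Jacobi fields that Lemma \ref{LemNW} provides near the center on a small subdisc $u_0(r\D)$ to Jacobi fields defined on all of $\D$; and (b) White's result (\cite{Wh1}, Corollary in Section 6) that a smooth Jacobi field is the initial velocity of an actual one-parameter family of stationary discs. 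Without an argument of this kind, your assertion that ``the surjectivity of $U$ onto the prescribed value and first derivative at $\zeta=0$ yields the required transverse freedom'' is exactly the claim to be proved: the componentwise Laplacian model of Lemma \ref{min-lemma} is only available in one chart near a flat graph, and the global Jacobi operator along a large minimal disc can a priori have kernel or cokernel obstructions that must be ruled out or circumvented.
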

The key result needed for the proof is the following
\begin{prop}
\label{PropSemi}
Let $u_0:\D \to M$ be a conformal harmonic immersion. Then there exists a neighborhood $U$ of 
$(u(0),du_0(0) \cdot e_1)$ in the tangent bundle $TM$ such that for each $(p,\xi) \in U$ there exists a conformal harmonic immersion 
$u:\D \to M$ satisfying $u(0) = p$, $du(0) \cdot e_1 = \xi$.
\end{prop}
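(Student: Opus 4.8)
The plan is to produce the required immersions by perturbing $u_0$ itself on the unit disc, rather than by rescaling its domain. The point is that replacing $u_0(\zeta)$ by $u_0(\lambda\zeta)$ multiplies the tangent vector $\xi_0 := du_0(0)\cdot e_1$ by $\lambda$, so one cannot shrink $u_0$ into a small ball in order to invoke Lemma~\ref{LemNW}(ii) without destroying the prescribed magnitude of $\xi$; when $\xi_0$ is large the target immersions must themselves have large image, i.e. must lie near $u_0$. I would therefore work in a tubular neighborhood of $u_0(\overline\D)$, represent a conformal harmonic immersion $u$ close to $u_0$ as a (normal) graph over $u_0$, and reduce the stationary equation $H(u)=0$ of Lemma~\ref{min-lemma} to a quasilinear elliptic system $\mathcal{L}(w)=0$ for the perturbation $w$. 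Its linearization at $w=0$ is the Jacobi operator $J$ of $u_0$, a second order elliptic operator of the form $\Delta$ plus zeroth order curvature terms acting on sections along $u_0$.

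The next step is to set up an evaluation map and solve by the implicit function theorem. I would consider a finite parameter family of deformations $u_s$ of $u_0$ through conformal harmonic immersions, built from two ingredients. First, precomposition with conformal (M\"obius) automorphisms $\psi_a$ of $\D$: since $u_0\circ\psi_a$ remains conformal and harmonic, letting $a=\psi_a(0)$ vary moves the $1$-jet $(u_s(0),du_s(0)\cdot e_1)=(u_0(a),\dots)$ along directions tangent to the surface $u_0(\D)$. Second, genuine deformations obtained by varying the boundary values $u_0|_{b\D}$ and re-solving, using the existence theorem of Morrey \cite{Mo2} together with the continuous dependence of Hildebrandt \cite{Hi} (or the implicit function theorem for the Dirichlet problem of $J$ when that problem is non-degenerate); their infinitesimal generators are Jacobi fields $v$ with $Jv=0$, and these supply the directions transverse to $u_0(\D)$. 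Writing $G(s):=(u_s(0),du_s(0)\cdot e_1)\in TM$, it then suffices to show that $G$ is a submersion at $s=0$; the implicit function theorem would give that $G$ covers a full neighborhood $U$ of $(u_0(0),\xi_0)$, and each value $G(s)=(p,\xi)$ is attained by a conformal harmonic immersion with $u_s(0)=p$ and $du_s(0)\cdot e_1=\xi$, which is the assertion.

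The main obstacle is the surjectivity of $DG(0)$ onto $T_{(u_0(0),\xi_0)}TM$, i.e. realizing every infinitesimal change of the $1$-jet $(v(0),dv(0)\cdot e_1)$, in the directions not already covered by the M\"obius family, through a Jacobi field $v$. This has two ingredients: producing Jacobi fields with prescribed boundary data (solving the Dirichlet problem for $J$, where the potential operator $U$ recalled before Lemma~\ref{LemNW} enters, after observing that in the near-flat rescaled model $J$ is a small perturbation of $\Delta$ and hence invertible), and showing that interior $1$-jet evaluation $v\mapsto(v(0),dv(0)\cdot e_1)$ hits the transverse directions, which follows from unique continuation for $J$ (a nonzero covector annihilating all such $1$-jets would, through the Green's operator of $J$, force a nontrivial solution of the adjoint equation singular only at the origin). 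The genuinely delicate case is when $u_0$ is degenerate, i.e. $J$ carries nontrivial Jacobi fields vanishing on $b\D$: then the boundary value problem is not uniquely solvable and smooth dependence on the boundary data may fail. I expect this to be the technical heart of the proof, and the robust way around it is to replace the submersion argument by a topological degree computation for the continuous map $G$, whose degree at $(u_0(0),\xi_0)$ is homotoped (via the metric rescaling $h_t$ of Lemma~\ref{LemNW}, which flattens the Jacobi operator to $\Delta$) to that of the non-degenerate Euclidean model, where it equals $\pm1$; a nonzero degree already forces $G$ to be onto a neighborhood of $(u_0(0),\xi_0)$, uniformly in $u_0$.
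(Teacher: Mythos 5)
Your starting point is the same as the paper's: linearize the stationary equation $H(u)=0$ at $u_0$, obtain the Jacobi operator, and try to realize prescribed $1$-jets $(v(0),dv(0)\cdot e_1)$ at the origin by Jacobi fields that integrate to families of conformal harmonic discs. You also correctly identify why Lemma~\ref{LemNW}(ii) cannot simply be invoked (rescaling the domain rescales $du(0)\cdot e_1$). But the two proofs diverge exactly at the point you yourself flag as the technical heart, and that is where your argument has a genuine gap. Your route produces global Jacobi fields by solving the Dirichlet problem for $J$ on $\D$ and integrates them by an implicit function theorem on boundary data; both steps fail when $J$ has nontrivial kernel with vanishing boundary values. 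Your proposed repair by a degree computation does not close the gap as stated: the homotopy you invoke, the metric rescaling $h_t$ of Lemma~\ref{LemNW}, flattens the geometry only on a small coordinate ball, so it can only be applied to the shrunken discs $u_0(t\D)$, whose $1$-jets at the origin have tangent vectors of size $O(t)$ --- this reintroduces precisely the magnitude problem you identified in your first paragraph. For a conformal harmonic immersion $u_0$ with large image there is no evident homotopy of the evaluation map $G$ to the non-degenerate Euclidean model, so the claimed degree $\pm 1$ is not established.

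The paper circumvents the Dirichlet degeneracy entirely, by two substitutions you may want to note. First, instead of solving a boundary value problem, it produces Jacobi fields \emph{locally}: the foliations of Lemma~\ref{LemNW}(ii) applied to $u_0(r\D)$ for small $r$ give one-parameter families of minimal discs in any transverse direction near $u_0(0)$, and differentiating in the parameter yields a Jacobi field defined only near $0\in\D$; a Runge-type approximation theorem of Lax \cite{La} (whose hypothesis of unique continuation is supplied by Calder\'on's theorem \cite{Ca}) then approximates this local Jacobi field by one defined on all of $\D$. No invertibility of the Dirichlet problem is needed. Second, instead of an implicit function theorem on boundary data, the paper invokes White's integrability theorem (\cite{Wh1}, Corollary in Section~6): every smooth Jacobi field along a stationary disc is the initial velocity of a one-parameter family of stationary discs. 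That theorem is the black box that replaces both your submersion argument and your degree computation, and it is valid without any non-degeneracy assumption on $u_0$. If you want to keep your more hands-on approach, you would at minimum need to (a) justify solvability of the Dirichlet problem after shrinking the disc to a radius $r<1$ where $J$ is non-degenerate, and (b) replace the degree argument by an actual integration statement for the resulting Jacobi fields, which is essentially White's theorem again.
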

The upper semicontinuity of $F_M$ follows immediately from this proposition and the definition of $F_M$. So the  
remainder of this section is devoted to the proof of Proposition \ref{PropSemi}.

\begin{proof}
Essentially, the proof is implicitely contained in the works
\cite{Wh1,Wh3}. For the convenience of the reader we include some details. Without loss of generality we identify $M$ with $\R^n$ using the  Nash isometric embedding theorem.

%Let $\phi: [0,1]^2 \times \D \to \R^n$ be a smooth family of immersions with $\phi(0,0,\bullet) = u_0$, $D_j\phi(0,0,\bullet) = h_j$, $j=1,2$.

The operator $DH(u_0)$ is called the Jacobi operator at $u_0$. In order words, the Jacobi operator is the linearization of $H$ at $u_0$. This is a second order linear PDE operator.
Furthermore, it is elliptic and self-adjoint. If a vector field $h$ is tangent to the disc $u_0(\D)$, then the value of $DH(u_0)$ on $h$, i.e., $DH(u_0) \cdot h$, vanishes. Thus, for any  $h$, 
the vector field $DH(u_0) \cdot h$ is  normal to $u_0(\D)$. Similarly to Section 2 (cf. the equations (\ref{stat1}) and (\ref{stat11})),  in what follows we identify $DH(u_0)$ with its normal projection  (with respect to $u_0(\D)$). A Jacobi field with respect to $u_0$ is a vector field in the kernel of $DH(u_0)$. It is established in \cite{Wh1} (Corollary  at Section 6) that a smooth Jacobi field at $u_0$ is the initial velocity vector field of a one-parameter family of stationary discs. In order to apply this result, we need to study the existence of Jacobi fields. % Recall the Proposition of Section 3 of \cite{W3}.
%Let $J$ be a second order linear  (homogeneous) elliptic system of $n$ unknown functions of   of $m$ variables with $C^{0,\alpha}$ ($\alpha > 0$) coefficients in a neighborhood 
% of $0 \in \R^m$.  Let a linear map $L:\R^m \to \R^n$ be such that the function $v(x) = Lx$ satisfies the constant coefficient system  $J(0) v = 0$ i.e. with the coefficients of $J$ taking at %$x = 0$. Then for every $a \in \R^n$  there exists $v \in C^{2,\alpha}(\R^m,\R^n$ such that $v(0) = a$, $Dv(0) = L$ 
% and $Jv = 0$ in some neighborhood of $0$ (in our case $m = 2$). For constant coefficient equations, the linear map $v(x) = Lx + a$ solves the problem. The general case follows by %the implicit function theorem.

The existence of the Jacobi fields in the direction transverse to $u_0(\D)$ is established in \cite{Wh3}. Consider a sufficiently small $r > 0$ and the disc $u_0(r\D)$. By Lemma \ref{LemNW}
this disc generates a one-parameter family of minimal discs in any direction tranverse to $u_0(\D)$ at the origin. Such a deformation is generated by a Jacobi field, obtained by taking the derivative with respect to the parameter, defined in a neighborhood of the origin on $u_0(\D)$: this provides the existence of a Jacobi field with respect to $u_0$  in a prescribed direction near the centre $u_0(0)$. The existence of a global Jacobi field now follows from P.Lax' Equivalence Theorem  of \cite{La},% {\color{red}(we should give the precise reference, but I do not have it)} % 
stating, in our case, that a Jacobi field defined in a neighborhood of $0$ in $\R^2$ can be approximated by a Jacobi field defined on the whole disc $\D$. The property of  unique continuation of solutions  required by theorem of P.Lax follows in our case from 
the Calderon uniqueness theorem, see \cite{Ca}, Theorem 11. 
Now Proposition \ref{PropSemi} follows.
\end{proof}

\section{The Kobayashi distance and coincidence theorem}

Denote by $P_{\D}$ the Poincar\'e metric, defined for $z \in \D$ and $v \in \C$ by

$$
P_{\D}(z,v) = \frac{\vert v \vert}{1- \vert z \vert^2}
$$

and by $\rho_{\D}$ the Poincar\'e distance on $\D$. Recall that for every $z,w \in \D$ :

$$
\rho_{\D}(z,w) = \frac{1}{2} \log \frac{1 + d(z,w)}{1- d(z,w)}
$$
where
$$
d(z,w) = \frac{\vert z - w \vert}{\vert 1 - z \overline w \vert}.
$$

 Let $p$ and $q$ be two points in the Riemannian manifold $(M,g)$. A Kobayashi chain from $p$ to $q$ is a finite
sequence of points $z_k$, $w_k$ in $\D$ and of conformal harmonic immersions $u_k: \D \to M$, $k = 1, \dots,m,$ such that $u_1(z_1) = p$, $u_m(w_m) = q$ and 
$u_k(w_k) = u_{k+1}(z_{k+1})$ for $k=1, \dots, m-1$. The existence of such chains follows from Lemma \ref{LemNW}. Indeed, by compactness any smooth path from $p$ to $q$ can be covered 
by sufficiently small balls such that the centre of a given ball is contained in the preceding ball. By Lemma \ref{LemNW} each ball is foliated by minimal discs through its centre, which provides a Kobayashi chain. The Kobayashi pseudodistance from $p$ to $q$ is then defined by
\begin{eqnarray}
\label{KobDist1}
d_M(p,q) = \inf \sum_k \rho_{\D}(z_k,w_k)
\end{eqnarray}
where the infimum is taken over all Kobayashi chains from $p$ to $q$.

On another hand, we consider the pseudodistance defined as the integrated form of $F_M$:

\begin{eqnarray}
\label{KobDist2}
\overline d_M(p,q) = \inf \int_0^1 F_M(\gamma(t), \dot{\gamma}(t))dt
\end{eqnarray}
where the infimum is taken over all piecewise smooth paths $\gamma$ from $p$ to $q$. Notice that $\overline{d}_M$ is well defined by Theorem~\ref{ThSemi}. The main result of this section is the following coincidence theorem.

\begin{thm}
\label{KobTh}
We have $d_M = \overline d_M$.
\end{thm}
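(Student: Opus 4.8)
The plan is to prove the two inequalities $d_M \le \overline{d}_M$ and $\overline{d}_M \le d_M$ separately, following the classical scheme of Royden for complex manifolds. The key point is that this is fundamentally a one-disc statement: the Poincaré metric $P_{\D}$ is itself the integrated form of the Kobayashi-Royden metric on the disc (by the Schwarz-Pick lemma), so both sides of the claimed equality are built from the same disc data, and the whole argument is about comparing two ways of measuring the same chains and paths.

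I would first prove the easier inequality $\overline{d}_M \le d_M$. Given a Kobayashi chain $(u_k, z_k, w_k)$ from $p$ to $q$, I would replace the straight-line comparison by an honest path in $M$. For each $k$, join $z_k$ to $w_k$ by the Poincaré geodesic $\sigma_k$ in $\D$, and set $\gamma_k = u_k \circ \sigma_k$; concatenating the $\gamma_k$ gives a piecewise smooth path $\gamma$ from $p$ to $q$. The decisive estimate is that along each piece, the definition of $F_M$ together with the distance-decreasing property of the conformal harmonic immersions forces $F_M(\gamma_k(t), \dot\gamma_k(t)) \le P_{\D}(\sigma_k(t), \dot\sigma_k(t))$; more precisely, at a point $u_k(\zeta)$ with velocity $du_k(\zeta)\cdot v$, precomposing $u_k$ with a Möbius automorphism of $\D$ sending $0$ to $\zeta$ exhibits a competitor immersion showing $F_M(u_k(\zeta), du_k(\zeta)\cdot v) \le P_{\D}(\zeta, v)$. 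Integrating over $\sigma_k$ and summing over $k$ yields $\int_0^1 F_M(\gamma, \dot\gamma)\,dt \le \sum_k \rho_{\D}(z_k, w_k)$, and taking the infimum over chains gives $\overline{d}_M \le d_M$.

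The harder inequality is $d_M \le \overline{d}_M$, and this is where the main obstacle lies. Here one starts with an arbitrary piecewise smooth path $\gamma$ from $p$ to $q$ and must manufacture a Kobayashi chain whose total Poincaré length is close to $\int_0^1 F_M(\gamma, \dot\gamma)\,dt$. The strategy is to subdivide $[0,1]$ into small intervals $[t_{i-1}, t_i]$ and, on each, cover the short arc $\gamma([t_{i-1}, t_i])$ by a single conformal harmonic immersion $u_i$ whose contribution to the chain length is controlled by $\int_{t_{i-1}}^{t_i} F_M(\gamma, \dot\gamma)\,dt$ up to an error that vanishes as the mesh shrinks. The obstruction, exactly as in Royden's original argument, is that $F_M$ is only upper semicontinuous (Theorem~\ref{ThSemi}), not continuous, so near a point $\gamma(t_i)$ with velocity $\dot\gamma(t_i)$ one can find an immersion realizing a value close to $F_M$ from above, but one must ensure this single immersion covers a whole sub-arc, not just one point. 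Upper semicontinuity gives, for each $i$, an immersion $u_i$ with $u_i(0) = \gamma(t_{i-1})$, $du_i(0)\cdot e_1 = r_i \dot\gamma(t_{i-1})$ and $1/r_i$ close to $F_M(\gamma(t_{i-1}), \dot\gamma(t_{i-1}))$; the smooth dependence and the local foliation structure from Lemma~\ref{LemNW}(ii) then let one slide the base point along the arc so that $\gamma(t_i)$ lies on the image $u_i(\D)$ at a point $\zeta_i$ with $\rho_{\D}(0, \zeta_i)$ comparable to $r_i^{-1}|\gamma(t_i) - \gamma(t_{i-1})|_g \approx \int_{t_{i-1}}^{t_i} F_M(\gamma, \dot\gamma)\,dt$.

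Finally I would assemble these immersions $u_1, \dots, u_m$ with the chosen parameter values into a genuine Kobayashi chain from $p$ to $q$: the matching conditions $u_i(\zeta_i) = \gamma(t_i) = u_{i+1}(0)$ hold by construction, so setting $z_i = 0$, $w_i = \zeta_i$ gives a valid chain. Summing $\sum_i \rho_{\D}(0, \zeta_i)$ and passing to the limit as the mesh of the subdivision tends to zero yields $d_M(p,q) \le \int_0^1 F_M(\gamma, \dot\gamma)\,dt + \varepsilon$ for every $\varepsilon > 0$; taking the infimum over paths $\gamma$ then gives $d_M \le \overline{d}_M$. Combining the two inequalities completes the proof. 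The one genuinely delicate part throughout is handling the upper semicontinuity of $F_M$, where I would need to be careful that the approximating immersions can be chosen to depend smoothly on the base point and velocity, a fact guaranteed by the smooth dependence asserted in Lemma~\ref{LemNW} and the local structure in Proposition~\ref{PropSemi}.
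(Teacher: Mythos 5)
Your first inequality ($\overline{d}_M \le d_M$) is correct and is essentially the paper's own argument: the pointwise estimate $F_M(u(z),du(z)\cdot\xi)\le P_{\D}(z,\xi)$ obtained by precomposing with M\"obius automorphisms, integrated along each link of the chain and summed using the triangle inequality for $\overline{d}_M$.

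The second inequality, however, has a genuine gap at the chain-matching step. You ask, for each $i$, for a single conformal harmonic immersion $u_i$ that simultaneously (a) is near-extremal at $\gamma(t_{i-1})$, i.e.\ $u_i(0)=\gamma(t_{i-1})$ and $du_i(0)\cdot e_1=r_i\dot\gamma(t_{i-1})$ with $1/r_i$ close to $F_M(\gamma(t_{i-1}),\dot\gamma(t_{i-1}))$, and (b) passes exactly through $\gamma(t_i)$, so that the matching condition $u_i(\zeta_i)=\gamma(t_i)=u_{i+1}(0)$ holds. These two requirements are in general incompatible: condition (a) pins down the $1$-jet of the disc at one point, and the image of the resulting disc is a specific $2$-dimensional surface in $M$ that has no reason to contain the nearby point $\gamma(t_i)$ when $\dim M>2$; it misses it by $O((t_i-t_{i-1})^2)$ in $dist_g$. ``Sliding the base point'' via the foliation of Lemma~\ref{LemNW}(ii) does not repair this: the leaves of a foliation are disjoint, so the leaf through $\gamma(t_i)$ is generically not the leaf through $\gamma(t_{i-1})$, and moving the base point destroys (a). The paper resolves exactly this difficulty by \emph{not} insisting that the extremal disc hit $\gamma(t_i)$: it first proves the auxiliary Lipschitz estimate $d_M(a,b)\le C\, dist_g(a,b)$, uniformly in small balls along $\gamma$ (Lemma~\ref{LipLemma}, itself a consequence of the foliation in Lemma~\ref{LemNW}), and uses it to bridge the gap between the point $u_i(s/r_i)=\gamma(t_{i-1})+s\dot\gamma(t_{i-1})+O(s^2)$ actually reached by the extremal disc and the target $\gamma(t_i)$; this bridge costs only $O((t_i-t_{i-1})^2)$ in $d_M$ and vanishes in the Riemann sum. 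Your proposal contains no analogue of this bridging device, and without it the chain you construct does not close up. A secondary, fixable omission: to control the Riemann sums one needs the continuous majorant $h>F_M(\gamma,\dot\gamma)$ supplied by upper semicontinuity (Theorem~\ref{ThSemi}); you identify the u.s.c.\ issue but do not actually deploy this device.
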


\begin{proof} First we note that for every conformal harmonic disc $u:\D \to M$ we have, for every $z \in \D$ and every $\xi \in \C$:

\begin{eqnarray}
\label{KobDist3}
F_M(u(z),du(z)\cdot \xi) \le P_{\D}(z,\xi).
\end{eqnarray}
Indeed, in the case where $z=0$ and $\xi = e_1$, this follows from the definition of $F_M$. Then it follows for any $z \in \D$ and any unit vector $\xi$, if we replace $u$ with the conformal harmonic disc $u \circ \phi$, where $\phi$ is a biholomorphic automorphism of $\D$ such that $\phi(0) = z$, $d\phi(0) \cdot e_1 = \xi$. Then the inequality~(\ref{KobDist3}) follows for any vector $\xi$ since the two metrics are absolutely homogeneous. As a consequence, for any conformal harmonic disc $u: \D \to M$ we have for every $z, w \in \D$:
\begin{eqnarray}
\label{KobDist4}
\overline{d}_M(u(z), u(w)) \le \rho_{\D}(z,w).
\end{eqnarray}

Consider now a Kobayashi chain between $p$ and $q$, as above. Then, by the triangle inequality and the inequality~(\ref{KobDist4}) we have:
$$
d_M(p,q) \le \sum_k d_M(u_k(z_k), u_k(w_k)) \le \sum \rho_{\D}(z_k,w_k).
$$

Taking the infimum over all Kobayashi chains joining $p$ to $q$, we obtain that $\overline d_M(p,q) \le d_M(p,q)$. 

Let us prove now the converse inequality. Fix $\varepsilon > 0$ and consider a smooth path $\gamma: [0,1] \to M$ satisfying 
$\gamma(0) = p$, $\gamma(1) = q$ and such that 

$$
\int_0^1 F_M(\gamma(t),\dot{\gamma}(t))dt < \overline d_M(p,q) + \varepsilon.
$$

Since $F_M$ is an upper-semicontinuous function, there is a continuous function $h:[0,1] \to \R$ satisfying 
$h(t) > F_M(\gamma(t),\dot{\gamma}(t))$ for every $t \in [0,1]$ and such that 

$$
\int_0^1 h(t)dt < \overline d_M(p,q) + \varepsilon.
$$

Therefore, for every sufficiently fine partition $0 = t_0 < t_1 <...< t_m = 1$ of $[0,1]$  we have

$$\sum_{i=1}^m h(t_{i-1})(t_i-t_{i-1}) < \overline d_M(p,q) + \varepsilon.$$

\begin{lemma}
\label{LipLemma}
There exist constants $C > 0$ and $\delta > 0$ such that for every $t \in [0,1]$ and every $a,b \in \B(\gamma(t),\delta)$ we have:
$$d_M(a,b) \le C dist_g(a,b)$$
where $dist_g$ denotes the distance induced by the Riemannian metric $g$ on $M$.
\end{lemma}
\begin{proof} By Lemma \ref{LemNW} there exists $\delta > 0$ small enough such that for any $t$ the ball 
$\B(\gamma(t),\delta)$ is foliated by immersed minimal discs, whose centers coincide with $\gamma(t)$. It also follows by the Lemma~\ref{LemNW} 
that for any $a$, $b$ in $\B(\gamma(t),\delta)$ there exists a conformal harmonic immersion $u:\D \rightarrow M$  through $a$ and $b$.
%Applying the Lagrange theorem (in local coordinates on $\B(\gamma(t),\delta)$) we obtain
The map $u$ being an immersion, there exists $C > 0$, only depending on $\delta$, such that
$$dist_g(a,b) = dist_g(u(z),u(w)) \geq C |z-w|.
$$
Moreover, by changing $\delta$ is necessary, there exists $C' > 0$, only depending on $\delta$, such that $\rho_{\D}(z,w) \le C' \vert w-z \vert$.

Now it follows from the definition of the Kobayashi distance that

$$
d_M(a,b) \le \rho_{\D}(z,w) \le C \vert w-z \vert \le C \vert b - a \vert.
$$
(Here, the constant $C > 0$ changes from inequality to inequality.)

All constants being uniform, this proves Lemma~\ref{LipLemma}.
\end{proof}
Fix $t \in [0,1]$. Since $h(t) > F_M(\gamma(t), \dot{\gamma}(t))$, there exists a conformal harmonic immersion $u : \D \rightarrow M$ and a real number $r > 0$ such that 
$u(0) = \gamma(t)$, $du(0) \cdot e_1 = r\dot\gamma(t)$ and $h(t) > 1/r$. Therefore, for every $s$ real close enough to the origin we have the expansion, in local coordinates in $M$:
$$
u(s/r) = \gamma(t) + (s/r)du(0) \cdot e_1 + O(s^2) = \gamma(t) + s\dot\gamma(t) + O(s^2).
$$
Note also that $\rho_{\D}(0,z) = \vert z \vert + O(\vert z \vert^2)$ for $z \in \D$ close enough to $0$.
Therefore, using Lemma \ref{LipLemma} we obtain:

\begin{eqnarray*}
d_M(\gamma(t), \gamma(t) + s\dot\gamma(t)) & \le & d_M(u(0), u(s/r)) + d_M(u(s/r), \gamma(t) + s\dot\gamma(t)) \\
& \le & \rho_{\D}(0, s/r) + C dist_g(u(s/r) , \gamma(t) + s\dot\gamma(t)) \\
& \le & \vert s \vert/r + O(s^2) \\
& < & \vert s \vert h(t) + O(s^2).
\end{eqnarray*}
Using again Lemma \ref{LipLemma}, we conclude that for $t, \tilde t \in [0,1]$, close enough, one has

\begin{eqnarray*}
d_M(\gamma(t), \gamma(\tilde t)) & \le & d_M(\gamma(t), \gamma(t) + (\tilde t - t)\dot\gamma(t)) + d_M(\gamma(t) + (\tilde t - t)\dot\gamma(t),\gamma(\tilde t)) \\
& \le & \vert \tilde t - t \vert h(t)  + O(\vert \tilde t - t \vert^2).
\end{eqnarray*}
As a consequence, for any sufficiently fine partition we have
\begin{eqnarray*}
d_M(p,q) \le \sum_{i=1}^m d_M(\gamma(t_{i-1}), \gamma(t_i)) \le \sum_{i=1}^m (t_i - t_{i-1}) h(t_{i-1})(1 + \varepsilon) < (1+ \varepsilon)(\overline d_M(p,q) + \varepsilon).
%d_M(p,q) & \le & \sum_{i=1}^m d_M(\gamma(t_{i-1}), \gamma(t_i)) \\
%& \le & \sum_{i=1}^m (t_i - t_{i-1}) h(t_{i-1})(1 + \varepsilon) \\
%& < & (1+ \varepsilon)(\overline d_M(p,q) + \varepsilon).
\end{eqnarray*}
Since $\varepsilon > 0$ is arbitrary, the proof is concluded.

\end{proof}

\section{Hyperbolicity}
The main purpose of this Section is to prove an analogue of Theorem~2, p. 133, of \cite{Ro}. This is proved in the context of Riemannian manifolds, when $(M,g) = (\mathbb R^n,g_{st})$, where $g_{st}$ denotes the standard Euclidian metric, in \cite{Dr-Fo}, Theorem 4.2. We follow the presentation of \cite{Ro}.

\begin{definition}\label{hyp-def}
\begin{itemize}
\item[$(i)$] A Riemannian manifold $(M,g)$ is called {\sl hyperbolic} at a point $x \in M$ if there is a neighborhood $U$ of $x$ and a positive constant $c$ such that $F_M(y,\xi) \geq \vert \xi\vert_g$ for every $y \in U$ and every $\xi \in T_yM$.

\item[$(ii)$] A Riemannian manifold $(M,g)$ is called {\sl tight} if the family of conformal harmonic immersions from $\D$ to $M$ is equicontinuous for the topology generated by $dist_g$.

\item[$(iii)$] A family $\mathcal F$ of mappings of a topological space $X$ into a topological space $Y$ is called  {\sl even} if, given $x \in X,\ y \in Y$ and a neighborhood $U$ of $y$, there is a neighborhood $V$ of $x$ and a neighborhood $W$ of $y$ such that for every $f \in F$, we have $f_{|V} \subset U$ whenever $f(x) \in W$.

\end{itemize}
\end{definition}

We have the following characterization of hyperbolicity of Riemannian manifolds
\begin{thm}
\label{HypTh}
Let $(M,g)$ be a Riemannian manifold. Then the following statements are equivalent:

\begin{itemize}
\item[$(i)$] the family $\mathcal C \mathcal H(\D,M)$ of conformal harmonic immersions from $\D$ to $M$ is equicontinuous with respect to the distance $dist_g$ i.e., $(M ,dist_g)$ is tight,

\item[$(ii)$] the family $\mathcal C \mathcal H(\D,M)$ is an even family,

\item[$(iii)$] $M$ is hyperbolic,

\item[$(iv)$] $d_M$ is a metric i.e., $M$ is Kobayashi hyperbolic,

\item[$(v)$] the Kobayashi metric $d_M$ induces the usual topology of $M$.
\end{itemize}
\end{thm}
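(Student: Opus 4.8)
The plan is to treat statement $(iii)$ as the hub of the equivalence: I would first show that hyperbolicity implies each of the other four conditions by soft arguments, and then reduce all four converses to a single geometric construction. The implications $(i)\Rightarrow(ii)$ and $(v)\Rightarrow(iv)$ are purely formal: equicontinuity with respect to $dist_g$ gives evenness by choosing, for a target neighborhood $U\ni y$, first a ball $\B(y,\varepsilon)\subset U$ and then the equicontinuity radius at the source point, while if $d_M$ induces the Hausdorff topology of $M$ it must separate points and hence is a genuine distance. For $(iii)\Rightarrow(iv)$ and $(iii)\Rightarrow(v)$ I would combine the local bound $F_M(y,\xi)\ge c\,\vert\xi\vert_g$ with the coincidence theorem~\ref{KobTh}, $d_M=\overline d_M$: integrating the lower bound along paths gives $d_M(a,b)\ge c\,dist_g(a,b)$ for nearby $a,b$, which together with the always-valid upper bound $d_M\le C\,dist_g$ from Lemma~\ref{LipLemma} makes $d_M$ locally comparable to $dist_g$; this yields both that $d_M$ is a distance and that it induces the usual topology. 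Finally $(iii)\Rightarrow(i)$ follows because every conformal harmonic immersion is distance decreasing, $d_M(u(z),u(w))\le\rho_{\D}(z,w)$ (established in the proof of Theorem~\ref{KobTh}), so local comparability of $d_M$ and $dist_g$ turns this into $dist_g$-equicontinuity.

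The heart of the matter is the converse, which I would isolate in the following key construction. \textbf{Claim:} if $(M,g)$ fails to be hyperbolic at a point $x$, then there is a nonconstant entire conformal harmonic map (a minimal plane) $\tilde u:\C\to M$ with $\tilde u(0)=x$ and $d\tilde u(0)\cdot e_1\ne 0$. To prove it, negating $(iii)$ produces points $y_k\to x$, unit vectors $\xi_k$, and conformal harmonic immersions $u_k:\D\to M$ with $u_k(0)=y_k$ and $du_k(0)\cdot e_1=r_k\xi_k$, where $r_k\to\infty$. I would blow up by reparametrizing the source, $v_k(z):=u_k(z/r_k)$ on $D(0,r_k)$; since a holomorphic change of the source variable preserves both conformality and harmonicity of a map from a surface, each $v_k$ is again conformal harmonic, now with $v_k(0)=y_k\to x$ and $\vert dv_k(0)\cdot e_1\vert_g=1$. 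The domains exhaust $\C$, and this normalization is precisely what forbids a constant limit.

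Granting the Claim, I would close the equivalences by contradicting $(ii)$ and $(iv)$. Fix $\zeta\in\C$ with $\tilde u(\zeta)\ne x$, which exists since $d\tilde u(0)\cdot e_1\ne 0$, set $\eta:=dist_g(x,\tilde u(\zeta))>0$ and $z_k:=\zeta/r_k\to 0$; by $C^0_{loc}$-convergence $u_k(z_k)=v_k(\zeta)\to\tilde u(\zeta)$. For $(iv)$: the distance-decreasing property $d_M(u_k(0),u_k(z_k))\le\rho_{\D}(0,z_k)$ has right-hand side tending to $0$, while $u_k(0)=y_k\to x$; since $d_M$ is $dist_g$-Lipschitz by Lemma~\ref{LipLemma} and hence $dist_g$-continuous, passing to the limit gives $d_M(x,\tilde u(\zeta))=0$ with $x\ne\tilde u(\zeta)$, so $d_M$ is not a distance and $(iv)$ fails. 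For $(ii)$: apply evenness of $\mathcal{CH}(\D,M)$ at source point $0\in\D$ and target point $x$ with $U=\B(x,\eta/2)$, obtaining a source neighborhood $V\ni 0$ and a target neighborhood $W\ni x$ such that $u_k(V)\subset U$ whenever $u_k(0)=y_k\in W$, i.e. for all large $k$; but $z_k\to 0$ eventually lies in $V$ while $u_k(z_k)\to\tilde u(\zeta)$ is eventually outside $\B(x,\eta/2)$, a contradiction, so $(ii)$ fails. These are exactly the contrapositives of $(iv)\Rightarrow(iii)$ and $(ii)\Rightarrow(iii)$, so together with the soft implications above the two cycles $(iii)\Rightarrow(i)\Rightarrow(ii)\Rightarrow(iii)$ and $(iii)\Rightarrow(v)\Rightarrow(iv)\Rightarrow(iii)$ close and all five statements are equivalent.

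\textbf{The main obstacle is the compactness step inside the Claim.} One must extract a $C^\infty_{loc}$-convergent subsequence $v_k\to\tilde u:\C\to M$ in which the normalization $\vert dv_k(0)\vert_g=1$ survives in the limit, so that $\tilde u$ is genuinely nonconstant. This is not a soft matter: it requires the interior a priori gradient and curvature estimates for minimal immersions, propagated from the normalized center by the $\varepsilon$-regularity/monotonicity machinery of B.~White \cite{Wh1,Wh3}, to bound $v_k$ on compact subsets of $\C$ and produce the limiting minimal plane. A secondary, more routine point that I would take care with is the uniformity of the constant in $(iii)\Rightarrow(i)$ on the compact sets that the relevant images meet, so that pointwise hyperbolicity does upgrade to global $dist_g$-equicontinuity of the family.
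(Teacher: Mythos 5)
Your architecture (two cycles through $(iii)$, soft implications via Theorem~\ref{KobTh} and Lemma~\ref{LipLemma}, converses by contraposition) is logically complete, but it rests on a step that is not actually carried out, and it is precisely the step the paper is organized to avoid. Your Claim requires extracting a $C^1_{loc}$-convergent subsequence from $v_k(z)=u_k(z/r_k)$ with the normalization $|dv_k(0)\cdot e_1|_g=1$ surviving in the limit. The plain rescaling gives you a derivative of norm $1$ at the origin and nothing else: there is no upper bound on $|dv_k|$ at other points of a fixed compact set, so no equicontinuity and no Arzel\`a--Ascoli; repairing this needs a Brody-type reparametrization lemma for conformal harmonic discs (maximize $|du(z)\cdot e_1|_g(1-|z|^2)$ and precompose with an automorphism), which you do not state or prove. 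Even granting bounded derivatives, on a noncompact or incomplete $(M,g)$ the images $v_k(K)$ need not stay in any fixed compact set, so the elliptic estimates for the system (\ref{EL2}) cannot be localized and the limit $\tilde u$ need not exist. You correctly flag this as ``the main obstacle,'' but flagging it is not the same as resolving it; as written the proof of $(ii)\Rightarrow(iii)$ and $(iv)\Rightarrow(iii)$ is incomplete. The paper's route is different and deliberately avoids any limiting argument: it deduces $(ii)\Rightarrow(iii)$ directly from Jost's Schwarz lemma for harmonic maps, which converts the evenness hypothesis ($u(D(0,\delta))\subset\B(p,R')$ whenever $u(0)\in\B(p,\delta')$) into the pointwise gradient bound $|\nabla u(0)|\le c_0R'/\delta$, hence $F_M(y,\xi)\ge \frac{\delta}{c_0R'}|\xi|_g$; and it closes the cycle as $(i)\Rightarrow(ii)\Rightarrow(iii)\Rightarrow(iv)\Rightarrow(v)\Rightarrow(i)$, so $(iv)\Rightarrow(iii)$ is never needed in isolation.

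A secondary weakness: in your $(iii)\Rightarrow(i)$ you invoke ``local comparability of $d_M$ and $dist_g$'' to convert the inequality $d_M(u(z),u(w))\le\rho_{\D}(z,w)$ into $dist_g$-equicontinuity, but the hyperbolicity constant $c$ and the neighborhood $U$ in Definition~\ref{hyp-def}$(i)$ depend on the point, while $u(z_0)$ ranges over all of $M$ as $u$ ranges over $\mathcal{C}\mathcal{H}(\D,M)$; on a noncompact $M$ these constants can degenerate, so the upgrade to equicontinuity of the whole family is not the routine uniformity check you describe. (The paper's $(v)\Rightarrow(i)$ is terse on the same point, but it at least only needs equicontinuity relative to the topology that $d_M$ is assumed in $(v)$ to induce.) If you want to keep the Brody-style converse, you would need to state and prove the reparametrization lemma and a compactness theorem for conformal harmonic immersions with bounded conformal factor; otherwise the Schwarz-lemma argument is both shorter and strictly weaker in its hypotheses.
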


\begin{proof}
The implication $(i) \Rightarrow (ii)$ follows from \cite{Ke} p. 237, where it is stated that equicontinuity of a family of mappings of a topological space $X$ into a topological space $Y$ with respect to any metric inducing the topology of $Y$ implies that the family is an even family.

\vspace{2mm}
$(ii) \Rightarrow (iii)$. This is based on the following result, see \cite{Jo1}, Theorem 4.8.1 p. 113 :

\vspace{1mm}
\noindent{\bf Theorem (Schwarz Lemma).}
{\sl Let $X$ and $Y$ be Riemannian manifolds. Let $\B(x_0,R_0) \subset X$, $R_0 < \min\left(i_X(x_0),\frac{\pi}{2\kappa_X}\right)$, where $i_X(x_0)$ denotes the injectivity radius at $x_0$ and $-\omega_X^2 \leq K_X \leq \kappa_X^2$ are curvature bounds on $\B(x_0,R_0)$. Let $\B(y_0,R') \subset Y$, $R' < \min\left(i_Y(y_0),\frac{\pi}{2\kappa_Y}\right)$, where $i_Y(y_0)$ denotes the injectivity radius at $x_0$ and $-\omega_Y^2 \leq K_Y \leq \kappa_Y^2$ are curvature bounds on $\B(y_0,R')$.

\vspace{1mm}
If $u : X \rightarrow \B(y_0,R')$ is harmonic, then for all $R \leq R_0$:
\begin{equation}\label{harm-ineq}
|\nabla u(x_0)| \leq c_0 \max_{x \in B(x_0,R)}\frac{d(u(x),u(x_0))}{R}
\end{equation}
where $c_0 = c_0(R_0,\omega_X, \kappa_X, \dim X, R', \omega_Y, \kappa_Y, \dim Y)$.
}
%\end{thm}

\vspace{1mm}
Let $p$ be a point in $M$ and let $\B(p,R')$ such that $R'$ satisfies the condition of the above Schwarz Lemma, with $y_0 = p$. By assumption, the family $\mathcal C \mathcal H(\D,M)$ being even, there exist $0 < \delta < 1$ and $0 < \delta' < R'$, such that for every conformal harmonic immersion $u: \D \rightarrow M$, we have $u_{|D(0,\delta)} \subset B(p,R')$, whenever $u(0) \in B(p,\delta')$.
%Here we denote by $D(0,\delta):=\{\zeta \in \C/\ |\zeta| < \delta\}$.
It follows by the Schwarz Lemma that there exists $c_0 > 0$ such that for every conformal harmonic immersion $u:\D \rightarrow M$, with $u(0) \in \B(p,\delta')$, we have
$$
|\nabla u(0)| \leq c_0 \frac{R'}{\delta}.
$$
Hence, for every $y \in\B(p,\delta')$ and for every $v \in T_yM$, we have: $\displaystyle F_M(y,v) \geq \frac{\delta}{c_0 R'}\vert v\vert_g$.

\vspace{2mm}
$(iii) \Rightarrow (iv)$. By the assumption on $F_M$ and the definition of $\overline{d}_M$, $\overline{d}_M$ is a distance. The implication is then given by Theorem~\ref{KobTh}.

\vspace{2mm}
$(iv) \Rightarrow (v)$. Let $x \in M$ and let $\delta > 0$ be such that $\B(x,\delta)$ is geodesically convex i.e., for every $y,z \in \B(x,\delta)$, there exists $\gamma_0: [0,1] \rightarrow M$ a piecewise $C^1$ smooth path joining $y$ to $z$, with  $\gamma_0([0,1]) \subset \B(x,\delta)$ and $l_g(\gamma_0) := \int_0^1 |\dot{\gamma_0}(t)|_g dt = dist_g(y,z)$. Applying Lemma~\ref{Lem2} to the compact set $K:=\overline{\B(x,\delta)}$, there exists $C_K > 0$, such that $F_M(y,v) \leq C_K \vert v\vert_g$, for every $y \in K$ and every $v \in T_y M$. It follows now from Theorem~\ref{KobTh}:

\begin{equation}\label{lip-eq}
d_M(y,z) = \overline{d}_M(y,z) \leq \int_0^1 F_M(\gamma_0(t), \dot{\gamma_0}(t))dt \leq C_K \int_0^1 \vert \dot{\gamma_0}(t)\vert_g dt = C_K dist_g(y,z).
\end{equation}
This means that the topology generated by $d_M$ is weaker than the topology generated by $dist_g$.

\vspace{1mm}
Moreover, it follows from (\ref{lip-eq}) that the map $y \mapsto d_M(x,y)$ is continuous on $\B(x,\delta)$ for the topology generated by $dist_g$. In particular, for every $0 < \alpha < \delta$, there exists $y_0 \in \partial \B(x,\alpha/2)$ such that
$$
d_M(x,y_0) = \inf_{y \in \partial \B(x,\alpha/2)}d(x,y) > 0.
$$
By the definition of $\overline{d}_M$, for every $y \in M \setminus \overline{\B(x,\alpha)}$
$$
d_M(x,y) = \overline{d}_M(x,y) \geq \overline{d}_M(x,y_0) = d_M(x,y_0).
$$
This means that the set $\{y \in M / d_M(x,y) < d_M(x,y_0)\} \subset \B(x,\alpha)$ i.e., the topology generated by $dist_g$ is weaker than the topology generated by $d_M$.

\vspace{2mm}
$(v) \Rightarrow (i)$. It follows from (\ref{KobDist1}) et (\ref{KobDist2}) that every conformal harmonic immersion $f:\D \rightarrow M$ and for every $\zeta, \zeta' \in \D$ : $d_M(f(\zeta), f(\zeta'))  \leq \rho_{\D}(\zeta,\zeta')$. Hence the family $\mathcal C \mathcal H(\D,M)$  is equicontinuous.
\end{proof}

We conclude by noticing that, as in the complex setting and following \cite{Ro}, we can define the notions of tautness and complete hyperbolicity for Riemannian manifolds :

\begin{definition}
\begin{itemize}
\item[(i)] A Riemannian manifold $(M,g)$ is called {\sl taut} if the family $\mathcal C \mathcal H(\D,M)$ is a normal family i.e., every sequence in $\mathcal C \mathcal H(\D,M)$ admits a subsequence that either converges to an element of $\mathcal C \mathcal H(\D,M)$ uniformly on compact subsets of $\D$, or is compactly divergent.

\item[(ii)] A Riemannian manifold $(M,g)$ is called {\sl complete hyperbolic} if it is Kobayashi hyperbolic and the metric space $(M,d_M)$ is complete.
\end{itemize}
\end{definition}

Then the characterization of tautness and complete hyperbolicity contained in Section 4 of \cite{Ro} still hold in our context. In particular we have the following

\begin{prop}\label{charact-prop}
Let $(M,g)$ be a Riemannian manifold. Then we have:

\begin{itemize}
\item[(i)] $M$ is complete hyperbolic if and only if for every $p \in M$ and every $r > 0$, the set $\{q \in M / \ d_M(p,q) \leq r\}$ is compact in $M$.

\item[(ii)] $M$ complete hyperbolic $\Rightarrow$ $M$ taut $\Rightarrow$ $M$ Kobayashi hyperbolic.

\item[(ii)] Let $\pi : \tilde M \rightarrow M$ be a covering map of $M$ and denote by $\tilde g$ the unique Riemannian metric on $\tilde M$ such that $\pi$ is a Riemannian covering map. Then $(\tilde M, \tilde g)$ is complete hyperbolic if and only if $(M,g)$ is complete hyperbolic.
\end{itemize}
\end{prop}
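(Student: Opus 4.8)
The plan is to follow Royden's treatment in Section~4 of \cite{Ro}, replacing the normal-family arguments of the holomorphic category by the compactness theory of minimal immersions of \cite{Wh1, Wh3}, and using systematically the coincidence Theorem~\ref{KobTh} (so that $d_M = \overline d_M$ is an inner, i.e.\ length, metric) together with the equivalences of Theorem~\ref{HypTh}.

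For the first item I would read the statement as the Hopf--Rinow theorem for the length metric $d_M$. Since $d_M = \overline d_M$ is the integrated form of $F_M$ by Theorem~\ref{KobTh}, the space $(M, d_M)$ is a locally compact length space. If $M$ is complete hyperbolic it is moreover complete, and the Hopf--Rinow theorem for length spaces yields that every closed ball $\{q : d_M(p,q) \le r\}$ is compact. For the converse, compactness of the closed balls gives completeness at once: a $d_M$-Cauchy sequence is bounded, hence contained in some compact ball, so it subconverges and therefore converges; hyperbolicity is then recovered as in \cite{Ro}, using the local estimate $d_M \le C\, dist_g$ of Lemma~\ref{Lem2} and the comparison of topologies in Theorem~\ref{HypTh}.

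For the second item, the implication ``taut $\Rightarrow$ hyperbolic'' is the soft direction: if $M$ were not hyperbolic it would fail to be even (Theorem~\ref{HypTh}), producing a sequence $u_k \in \mathcal C\mathcal H(\D, M)$ with $u_k(0) \to y$ and points $z_k \to 0$ such that $u_k(z_k)$ lies outside a fixed neighborhood of $y$; a normal-family subsequence cannot be compactly divergent (its centers stay bounded) and its uniform limit near $0$ forces $u_k(z_k) \to y$, a contradiction. The implication ``complete hyperbolic $\Rightarrow$ taut'' is the hard one. Given a sequence in $\mathcal C\mathcal H(\D, M)$ which is not compactly divergent, the distance-decreasing inequality~\eqref{KobDist4} together with the compactness of closed balls from the first item confine the images of each relatively compact subdisc to a fixed compact set; equicontinuity is provided by tightness (Theorem~\ref{HypTh}), and Arzel\`a--Ascoli extracts a uniformly convergent subsequence. \textbf{The main obstacle} is to ensure that the limit is again a conformal harmonic \emph{immersion} and not a degenerate, rank-dropping map: this nondegeneracy, automatic in the holomorphic case, is exactly what the compactness theory of \cite{Wh1, Wh3} is invoked to guarantee.

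For the third item I would first establish the exact compatibility of the Kobayashi--Royden metrics under $\pi$. As $\pi$ is a Riemannian covering it is a local isometry, so it carries conformal harmonic immersions to conformal harmonic immersions; conversely, since $\D$ is simply connected, every conformal harmonic immersion $u : \D \to M$ lifts through $\pi$ to one into $\tilde M$ through any prescribed point of $\pi^{-1}(u(0))$. Comparing the competitors in the definition of $F$ yields $F_{\tilde M}(\tilde p, \tilde \xi) = F_M(\pi(\tilde p), d\pi \cdot \tilde \xi)$, a sharpening of Lemma~\ref{Lem1}. Integrating and lifting paths gives the standard covering relation $d_M(\pi(\tilde p), \pi(\tilde q)) = \inf_{\tilde q' \in \pi^{-1}(\pi(\tilde q))} d_{\tilde M}(\tilde p, \tilde q')$, so $\pi$ is distance non-increasing and locally an isometry for the Kobayashi distances, and $\{q : d_M(p,q) \le r\} = \pi(\{\tilde q : d_{\tilde M}(\tilde p, \tilde q) \le r\})$ for any fixed lift $\tilde p$ of $p$. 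Equivalence of hyperbolicity in both directions then follows by pulling back the local lower bound for $F$, and equivalence of completeness follows from the first item: downstairs the ball is the continuous image of an upstairs ball, while upstairs a ball lies inside $\pi^{-1}$ of a compact ball and its points are reached by paths of bounded length, so bounded-length path-lifting over a compact set (the Hopf--Rinow argument for coverings) shows it is relatively compact. Applying the criterion of the first item on both $M$ and $\tilde M$ then concludes.
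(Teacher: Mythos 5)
The paper offers no actual proof of this proposition---it simply asserts that the arguments of Section~4 of \cite{Ro} carry over---so your proposal has to be measured against Royden's scheme, which is indeed the intended route. Your treatments of items (i) and (iii) are sound sketches of that scheme: the Hopf--Rinow argument for the locally compact length metric $d_M=\overline d_M$ (using Theorem~\ref{KobTh} and the topology comparison in Theorem~\ref{HypTh}), and the lifting of conformal harmonic discs, chains and paths through the Riemannian covering, are exactly what is needed. One caveat in (i): the ``if'' direction requires hyperbolicity as a standing hypothesis (a compact manifold with $d_M\equiv 0$ has all closed $d_M$-balls compact without being complete hyperbolic); this hypothesis is present in Royden's original formulation and cannot be ``recovered'' from ball-compactness as you suggest.

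The genuine gap is in the implication ``complete hyperbolic $\Rightarrow$ taut''. You correctly isolate the obstacle---the locally uniform limit of conformal harmonic immersions need not be an immersion---but your resolution, namely that the compactness theory of \cite{Wh1,Wh3} guarantees nondegeneracy of the limit, is not correct. Take any $u\in\mathcal C\mathcal H(\D,M)$ and set $v_k(z)=u(z/k)$: each $v_k$ is again a conformal harmonic immersion of $\D$, the sequence is not compactly divergent since $v_k(0)=u(0)$ for all $k$, and it converges uniformly on compact sets to the constant map $u(0)$, which does not belong to $\mathcal C\mathcal H(\D,M)$. Hence, with the paper's definition of tautness read literally, no manifold of dimension at least $2$ is taut, and no compactness theorem can repair this. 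What is actually needed is to enlarge the class in which the limit is allowed to live (admitting constant and branched conformal harmonic maps, as Royden's family $\mathcal O(\D,M)$ does in the holomorphic case and as is done in \cite{Dr-Fo}); once that is granted, your Arzel\`a--Ascoli argument, combined with elliptic regularity to see that a $C^0$-limit of conformal harmonic maps with images in a fixed compact set is again conformal harmonic, does close the implication. As written, however, the step ``the limit is again an immersion'' is false, and the proof of item (ii) does not go through.
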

%\section{Tautness and completeness}

{\footnotesize

\end{document}